\documentclass[12pt]{amsart}
\usepackage{amssymb}
\usepackage[latin2]{inputenc}
\usepackage[english]{babel}
\usepackage[bottom=3cm,top=2cm]{geometry}
\usepackage{mdwlist}
\usepackage{enumerate}

\makeatletter
\@namedef{subjclassname@2010}{%
  \textup{2010} Mathematics Subject Classification}
\makeatother

\newtheorem{theorem}{Theorem}[section]

\newtheorem{proposition}[theorem]{Proposition}
\newtheorem{corollary}[theorem]{Corollary}
\newtheorem{lemma}[theorem]{Lemma}
\newtheorem{claim}[theorem]{Claim}

\theoremstyle{definition}
\newtheorem{definition}[theorem]{Definition}
\newtheorem{remark}[theorem]{Remark}
\newtheorem{example}[theorem]{Example}


\frenchspacing

\textwidth=13.5cm
\textheight=23cm
\parindent=16pt
\oddsidemargin=-0.5cm
\evensidemargin=-0.5cm
\topmargin=-0.5cm



\newcommand\comp{\circ}
\newcommand{\conc}{ {}^\frown}
\newcommand{\restr}{\ensuremath\hspace{-3pt}\restriction\hspace{-3pt}}
\newcommand{\id}[1]{\ensuremath{\mathrm{id}_{#1}}}
\newcommand{\inj}[1]{\ensuremath{\mathit{Inj}(#1)}}
\newcommand{\sym}[1]{\ensuremath{\mathit{Sym}(#1)}}
\newcommand{\dom}{\ensuremath{\mathrm{dom}\;}}
\newcommand{\pwrset}[1]{\ensuremath{\mathcal{P}({#1})}}
\renewcommand{\bar}{\overline}

\newcommand{\Fsigma}{\ensuremath{\mathbf{\Sigma}^0_2}}
\newcommand{\Gdelta}{\ensuremath{\mathbf{\Pi}^0_2}}
\newcommand{\analytic}{\ensuremath{\mathbf{\Sigma}^1_1}}
\newcommand{\anform}[1]{\ensuremath{\mathbf{\Sigma}^1_1}({#1})}
\newcommand{\Iminus}[1]{\ensuremath{\mathrm{I}^-(#1)}}
\newcommand{\I}[1]{\ensuremath{\mathrm{I}(#1)}}
\newcommand{\producttop}{\ensuremath{\tau_{\mathrm{pr}}}}

\newcommand{\struktura}[1]{\ensuremath{\mathcal{#1}}}

\newcommand{\At}{\ensuremath{\mathrm{At}}}
\newcommand{\LL}[2]{\ensuremath{{L}_{{#1}{#2}}}}
\newcommand{\Mod}[2]{\ensuremath{\mathrm{Mod}_{#1}^{#2}}}
\DeclareMathAlphabet{\mathpzc}{OT1}{pzc}{m}{it}
\newcommand{\Modf}[1]{\ensuremath{\mathpzc{Mod}_{#1}}}
\newcommand{\Modt}[2]{\ensuremath{\mathpzc{Mod}^{#1}_{#2}}}
\renewcommand{\mod}[3]{\ensuremath{\mathrm{Mod}_{#1}(#2,#3)}}


\begin{document}


\baselineskip=17pt



\title[A dichotomy for the generalized Baire space]{A dichotomy theorem for the generalized Baire space and elementary embeddability at uncountable cardinals}
\author[D. Szir\'aki]{Dorottya Szir\'aki}
  \address{Alfr\'ed R\'enyi 
 Institute of Mathematics\\ 
 Hungarian Academy of Sciences\\
 Re\'altanoda~u.~13--15, H--1053 Budapest, Hungary,
 and
    Department of Mathematics and its Applications\\ Central European University\\
    N\'ador u. 9, H--1051 Budapest, Hungary 
}
  \email{sziraki\_dorottya@phd.ceu.edu}
\author[J. V\"a\"an\"anen]{Jouko V\"a\"an\"anen}
\address{Department of Mathematics and Statistics\\
Gustaf H\"allstr\"omin katu~2b, FI--00014 University of Helsinki, Finland,
 and Institute for Logic, Language and Computation\\ University of Amsterdam\\ 1090 GE Amsterdam, Netherlands}
\email{jouko.vaananen@helsinki.fi}

\date{}

\begin{abstract}
We consider the following dichotomy for $\Fsigma$ finitary relations $R$ on analytic subsets of the generalized Baire space for $\kappa$: either all $R$-independent sets are of size at most $\kappa$, or there is a $\kappa$-perfect $R$-independent set. 
This dichotomy is the uncountable version of a result
found in (W. Kubi\'s, Proc. Amer. Math. Soc.~131~(2003), no~2.:619--623) and in (S. Shelah, Fund. Math.~159~(1999), no.~1:1--50). We prove that the above statement holds assuming $\Diamond_\kappa$ and the set theoretical hypothesis $\Iminus\kappa$, which is the modification of the hypothesis $\I\kappa$ suitable for limit cardinals. When $\kappa$ is inaccessible, or when $R$ is a closed binary relation, the assumption $\Diamond_\kappa$ is not needed.
  
We obtain as a corollary the uncountable version of a  result by G. S\'agi and the first author (Log. J. IGPL~20~(2012), no.~6:1064--1082) about the $\kappa$-sized models of a $\anform{L_{\kappa^+\kappa}}$-sentence when considered up to isomorphism, or elementary embeddability, by elements of a $K_\kappa$ subset of ${}^\kappa\kappa$. 
The role of elementary embeddings can be replaced by a more general notion that also includes embeddings, as well as the maps preserving $L_{\lambda\mu}$ for $\omega\leq\mu\leq\lambda\leq\kappa$ and the finite variable fragments of these logics.
\end{abstract}

\subjclass[2010]{Primary 03E15; 
Secondary 03C45, 
03C57, 
03E55} 

\keywords{generalized Baire space, dichotomy theorem, $\Fsigma$ relations, 
elementary embeddability, uncountable models}

\maketitle

\section{Introduction} 
The domain of the \emph{generalized Baire space for $\kappa$}, or the $\kappa$-Baire space for short, is the set ${}^\kappa\kappa$ of functions from $\kappa$ to $\kappa$, and its topology is given by the basic open sets
\[
  N_p=\{x\in{}^\kappa\kappa : p\subseteq x\}
\]
for all $p\in{}^{<\kappa}\kappa$. 
The \emph{generalized Cantor space for} $\kappa$ is ${ }^\kappa 2$ with the topology induced by the $\kappa$-Baire space via the natural injection of ${ }^\kappa 2$ into ${ }^\kappa \kappa$. 
Unless otherwise stated, ${}^\kappa\kappa$ and ${}^\kappa 2$ are endowed with the above topologies, and for $2\leq n<\omega$ the set ${}^{n}({}^{\kappa}\kappa)$ is endowed with the resulting product topology; as in classical descriptive set theory, this space is homeomorphic to ${}^\kappa\kappa$. 
The hypothesis $\kappa^{<\kappa}=\kappa$ is usually assumed when working with the $\kappa$-Baire and $\kappa$-Cantor space, because it
implies that these spaces have some nice properties. Under it, for example, the standard bases of both spaces are of size $\kappa$ and consist of clopen sets, 
and, furthermore, 
the intersection (resp. union) of $<\kappa$ many open (closed) sets is open (closed). 

A subset of a topological space $X$ is defined to be $\Fsigma(\kappa)$ (resp. $\Gdelta(\kappa)$), if it can be written as the union (intersection) of at most $\kappa$ many closed (open) subsets of $X$.
More generally, the collection of \emph{$\kappa$-Borel} subsets of a topological space $X$ is the smallest set of subsets of $X$ which contains the open subsets and is closed under complementation and taking unions and intersections of at most $\kappa$ many sets.
A subset of $X$ is \emph{$\kappa$-analytic}, or $\analytic(\kappa)$, if it can be obtained as a continuous image of a closed subset 
of the $\kappa$-Baire space ${}^\kappa\kappa$;
this concept was introduced in \cite{MeklerVaananen} 
 (for the $\kappa$-Baire space). 
 In
 the case of the $\kappa$-Baire space ${}^\kappa\kappa$ (when $\kappa^{<\kappa}=\kappa$ holds),
 a subset is $\kappa$-analytic iff
 it is the image of a $\kappa$-Borel subset of ${}^\kappa\kappa$ under a $\kappa$-Borel map.
 When the topological space $X$ is homeomorphic to a $\kappa$-analytic subset of the $\kappa$-Baire space, we will omit the ``$\kappa$'' when talking about $\Fsigma(\kappa)$, $\Gdelta(\kappa)$, $\kappa$-Borel or $\kappa$-analytic subsets of $X$, i.e., we will call such subsets $\Fsigma$, $\Gdelta$, Borel and analytic (or $\analytic$) subsets, respectively. 
 
We denote by $\Iminus\kappa$ the following set theoretical hypothesis:
\begin{quotation}
  \noindent 
there is a $\kappa^+$-complete normal (non principal) 
ideal $\mathcal I$ on $\kappa^+$ such that the set 
\[
\mathcal{I}^+=\pwrset{\kappa^+}-\mathcal I
\]
contains a dense subset $K$ such that 
 every descending sequence of length $<\kappa$ of elements of $K$ has a lower bound~in~$K$.
\end{quotation}
This hypothesis is the modification of the hypothesis \I{\kappa} (introduced in~\cite{HodgesShelah}) which is appropriate for limit cardinals $\kappa$ 
 (see also \cite{GalvinJechMagidor, MekSheVaa, SheTuuVaa} and \cite{VaananenCantorBendixson} where the specific case of \I{\omega} is considered).

If $\kappa$ is a regular cardinal and $\lambda>\kappa$ is measurable, then L\'evy-collapsing $\lambda$ to $\kappa^+$ yields a model of ZFC in which $\Iminus\kappa$ holds.
The corresponding statement for $\I\kappa$ is an unpublished result of Richard Laver; the proofs can be reconstructed from the $\I\omega$ case, which is described in \cite{GalvinJechMagidor}. 
Note that $\Diamond_\kappa$ will also hold in the model obtained in the above way, and therefore the consistency of $\Iminus\kappa$ together with $\Diamond_\kappa$ follows from the existence of a measurable $\lambda>\kappa$. 
A model in which $\Iminus\kappa$ holds and $\kappa$ is inaccessible can be obtained in the same way, if we start out from a situation where $\kappa$ is inaccessible and there exists a measurable $\lambda>\kappa$.
 The following two facts were suggested to us by Menachem Magidor.
 \begin{enumerate}[1.]
  \item Given any cardinal $\kappa$, and assuming the existence of a measurable $\lambda>\kappa$, one can also obtain a model in which $2^\kappa>\kappa^+$ holds together with $\Iminus\kappa$ (and also $\Diamond_\kappa$), by forcing with the product of the L\'evy-collapse of $\lambda$ to $\kappa^+$ and $\textit{Add}(\kappa,\mu)$ for any $\mu>\lambda$ 
    (the standard forcing notion that adds $\mu$-many Cohen subsets of $\kappa$).
  \item
It is also possible for a supercompact $\kappa$ to satisfy $\Iminus\kappa$ and even $2^\kappa>\kappa^+$ (see Remark~\ref{weakly compact2}). 
\end{enumerate}
\indent We note that the hypothesis $\kappa^{<\kappa}=\kappa$, which is useful when considering the $\kappa$-Baire space,
 follows from our assumption $\Iminus\kappa$. 
 This 
 implication can be proven by a straightforward generalization of the proof of the fact that $\I\omega$ implies CH \cite[p.~13]{SheTuuVaa}. 
 Using this fact, and a result of Shelah \cite{ShelahDiamonds} that $\Diamond_\kappa$ holds when $\kappa=\mu^+=2^\mu>\aleph_1$, we have that $\Iminus\kappa$ implies $\Diamond_\kappa$ whenever $\kappa>\aleph_1$ is a successor cardinal.
\\[5 pt]
\indent
A subset $Y$ of the generalized Baire space ${}^\kappa\kappa$ is defined to be 
\emph{perfect} if $Y$ consists of the $\kappa$-branches $[T]$ of a \emph{perfect tree} $T$, i.e., a subtree $T\subseteq {}^{<\kappa}\kappa$ whose set of splitting nodes is cofinal and which is $<\kappa$-closed (i.e., every increasing sequence in $T$ of length $<\kappa$ has an upper bound in $T$); 
this concept was introduced in \cite{VaananenCantorBendixson}.
Note that $X\subseteq{}^\kappa\kappa$ contains a perfect subset iff there is a continuous injection of ${}^\kappa 2$ into $X$ iff there is such a Borel injection; see, e.g.,
\cite[Proposition~2]{Friedman2014}.

Let $X$ be an analytic subset of the $\kappa$-Baire space ${}^\kappa\kappa$. We say that $R$ is a \emph{$\Fsigma$ (closed, Borel, etc.) relation on $X$} if,
for some $1\leq n<\omega$, 
$R$ is a 
$\Fsigma(\kappa)$ (closed, $\kappa$-Borel, etc.) subset of the product space ${}^n X$, where $X$ is endowed with the subspace topology induced by the $\kappa$-Baire space. 
A subset $Y$ of $X$ is \emph{$R$-independent} iff for all pairwise distinct $y_0,\dots, y_{n-1}\in Y$ we have $(y_0,\dots, y_{n-1})\notin R$.

Our main result, whose proof is detailed in Section~\ref{Fsigma rels section}, is as follows. 
\\[5 pt]
{\bf Theorem~\ref{fkappa orders}} {\it 
  Assume $\Iminus\kappa$ and either that $\Diamond_\kappa$ or that $\kappa$ is inaccessible. 
  \\[5 pt]
  Suppose $R$ is a $\Fsigma$ relation on an analytic subset of the $\kappa$-Baire space ${}^\kappa\kappa$. 
Then either all $R$-independent sets are of size $\leq\kappa$,  or there exists a perfect $R$-independent set.}
  \\[5 pt]
  \indent
  We note that in the case that $R$ is a \emph{closed binary relation} on an analytic subset of ${}^\kappa\kappa$, only the hypothesis $\Iminus\kappa$ is needed for the above dichotomy to hold; 
see Remark~\ref{closed rels}.

This dichotomy 
is the uncountable generalization of 
\cite[Corollary~2.13]{Kubis}. (There, the result is formulated in terms of homogeneous sets of $G_\delta$ colorings on analytic spaces; see Corollary~\ref{colorings} for this formulation in the uncountable case.) 
The special case of the result for binary relations on Polish spaces is also mentioned in \cite[Remark~1.14]{ShelahBorelSq}. 
  
The above dichotomy for uncountable $\kappa$ does not follow from ZFC alone; if there exists a weak $\kappa$-Kurepa tree, for example, then it can not hold for $\kappa$. This further implies that when $V=L$, the dichotomy fails for all uncountable $\kappa$, 
 and that the consistency of this dichotomy is at least that of an inaccessible cardinal. 
(See Remark~\ref{PSP} for details.)
By Theorem~\ref{fkappa orders}, we have that the consistency of a measurable cardinal implies the consistency of the above dichotomy; however
we do not yet know its exact consistency strength.

  A specific case of Theorem~\ref{fkappa orders} is that the ($\kappa$-)Silver dichotomy holds for $\Fsigma$ equivalence relations on the $\kappa$-Baire space under the assumption $\Iminus\kappa$ together with either $\Diamond_\kappa$ or the inaccessibility of $\kappa$.
Recently, a considerable effort has been made to investigate set theoretical conditions implying (the consistency of) the satisfaction or the failure of the Silver dichotomy for Borel equivalence relations on the generalized Baire space, see, e.g., \cite{Friedman2014,FriedKul} and \cite[Section~4.2]{FriedHyttKul}. It would be worth investigating whether our hypotheses imply this more general case as well.
\vspace{5 pt}

In Section~\ref{models section}, we use the results of the previous section to obtain 
model theoretic dichotomies motivated by 
the spectrum problem.

Suppose $\kappa$ is a cardinal and $\psi$ is a sentence in 
$\anform{\LL{\kappa^+}\kappa}$ (i.e., it is a second order sentence of the form $\exists \bar R\,\varphi(\bar R)$ where $\bar R$ is a set of $\leq \kappa$ many symbols disjoint from the original vocabulary and $\varphi(\bar R)$ is an $\LL{\kappa^+}\kappa$ sentence in the expanded language). 
One obtains interesting questions by considering, instead of the number of non-isomorphic $\kappa$-sized models of $\psi$, the possible sizes of sets of such models which are pairwise non-elementarily embeddable, as in for example \cite{BaldwinDiverseClasses,ShelahIE}. 
More generally, the role of elementary embeddings may be replaced by embeddings preserving (in the sense of (\ref{ee def}) in Definition~\ref{elem emb def}) ``nice'' sets of formulas, possibly of some extension of first order logic.

We consider the case when 
the ``nice'' sets of formulas to be preserved
are what we call \emph{fragments} of $\LL{\kappa^+}\kappa$ (see Definition~\ref{fragment def}).
Examples of fragments of $\LL{\kappa^+}\kappa$ include not only the set of all first order formulas and the set $\At$ 
of all atomic formulas and their negations (the maps preserving these sets of formulas are elementary embeddings and embeddings, respectively), but also the infinitary logics $\LL\lambda\mu$, where $\omega\leq\mu\leq\lambda\leq\kappa$, 
and $n$-variable fragments of these logics.
In the case of fragments
$F\subseteq \LL{\kappa^+}\omega$ 
and sentences $\psi\in F$, 
the set of models of $\psi$ 
together with the embeddings preserving $F$
forms an abstract elementary class, and 
the corresponding
version of the above question
has been 
studied in, e.g., \cite{ShelahAEC1}.
To the best knowledge of the first author, 
this question has not been studied yet in the case
fragments of $\LL{\kappa^+}\kappa$ which are not subsets of $\LL{\kappa^+}\omega$. 

Since we are dealing with models of size $\kappa$ up to 
elementary embeddability or, more generally, 
up to embeddability 
by maps
preserving fragments of $\LL{\kappa^+}\kappa$ (and isomorphisms are a special case of such embeddings), 
we may assume that all models have domain $\kappa$. Accordingly, let $\Mod\kappa\psi$ denote the set of models of $\psi$ with domain $\kappa$,
and denote by $\inj\kappa$ the set of injective functions in ${}^\kappa\kappa$.
Then any embedding between elements of $\Mod\kappa\psi$ (preserving a fragment of $\LL{\kappa^+}\kappa$) is an element of $\inj\kappa$. 
It is natural to ask what happens when, in the above questions, 
the role of $\inj\kappa$ is replaced by a certain subset $H$ of $\inj\kappa$, i.e., 
when $\Mod\kappa\psi$ is considered up to 
only the embeddings which are in $H$ (and preserve the given fragment of $\LL{\kappa^+}\kappa$).
Notice that when $H$ is a subgroup of $\sym\kappa$, the above question reduces to considering models up to isomorphisms in $H$. 

  More precisely, let $\struktura A$ and $\struktura B$ be models with domain $\kappa$. 
For a subset $H$ of $\inj\kappa$,
we say that \struktura A is \emph{$H$-elementarily embeddable} into $\struktura B$ iff there is some function $h$
in $H$ that embeds $\struktura A$ elementarily into $\struktura B$. 
(In the special case that $H$ is a subgroup of $\sym\kappa$, we say that the two structures are \emph{$H$-isomorphic}.) 
Analogously, we define 
the more general notion of \emph{$(F,H)$-elementary embeddability} for fragments $F$ of $\LL{\kappa^+}\kappa$
(see Definition~\ref{elem emb def}).
We are interested in the possible sizes of pairwise non $(F,H)$-elementarily embeddable subsets of $\Mod\kappa\psi$.
By introducing the set $H$ of ``allowed embeddings'' as an extra parameter, we may study explicitly the
role the topological properties of $H$ play in the above question.

A subset $C$ of a topological space is defined to be \emph{$\kappa$-compact} if any open cover of $C$ has a subcover of size $<\kappa$, and $C$ is \emph{$K_\kappa$} if it can be written as the union of at most $\kappa$ many $\kappa$-compact subsets. 
A topological space is $K_\kappa$ if it is a $K_\kappa$ subset of itself.

The next dichotomy theorem, which is the main result of Section~\ref{models section}, 
gives an answer to the above question when
$H$ is a $K_\kappa$ subset of the $\kappa$-Baire space, or, 
 for certain fragments,
when
$H$ is a $K_\kappa$ subset of the product space $({}^\kappa\kappa,\producttop)$, where $\producttop$ is the product topology on the set ${}^\kappa\kappa$ obtained by equipping $\kappa$ with the discrete topology. 
\\[5 pt]
{{\bf Theorem~\ref{elem emb}} 
{\it 
Assume the set theoretical hypotheses of Theorem~\ref{fkappa orders}. 
Suppose that $H\subseteq\inj\kappa$, $F$ is a fragment of $\LL{\kappa^+}\kappa$ and $\psi$ is a sentence of $\anform{\LL{\kappa^+}\kappa}$. Suppose that either 
\\[2 pt]1. $H$ is a $K_\kappa$ subset of the $\kappa$-Baire space, or
\\2. $H$ is a $K_\kappa$ subset of the product space 
$({}^\kappa\kappa,\producttop)$ 
and $F\subseteq\LL{\kappa^+}\omega$.
\\[2 pt]If there are at least $\kappa^+$ many pairwise non $(F,H)$-elementarily embeddable models  
in $\Mod\kappa\psi$,
then there are perfectly many such models.
}}
\\[5 pt]
\indent
Theorem~\ref{elem emb} 
can be seen as uncountable version of \cite[Theorems~5.8 and~5.9]{SagiSz}.
We note that these two cited theorems of \cite{SagiSz} also follow from \cite[Corollary~2.13]{Kubis} or from \cite[Remark~1.14]{ShelahBorelSq}.  

In order to prove Theorem~\ref{elem emb}, we have to first show in the beginning of Section~\ref{models section} that for any fragment $F$ 
and $K_\kappa$ subset $H$ of the $\kappa$-Baire space,
$\Mod\kappa\psi$ can be viewed as an analytic subset of the $\kappa$-Cantor space 
\emph{on which $(F,H)$-elementary embeddability is a $\Fsigma$ binary relation}
(and when $F\subseteq\LL{\kappa^+}\omega$, this holds even when $H$ is $K_\kappa$ only in the product topology~$\producttop$).
This is done by
considering 
a Borel refinement (the Borel refinement $t_F$ induced by~$F$) 
of the canonical topology used to study the deep connections between model theory and generalized descriptive set theory (see~\cite{MeklerVaananen} and, e.g.,~\cite{VaananenGamesTrees} and~\cite{FriedHyttKul}), 
and
generalizing to the uncountable case an argument in \cite{MorleyVC}. 
These arguments allow us to obtain Theorem~\ref{elem emb} as a special case of our more general dichotomy result, Theorem~\ref{fkappa orders}.
In 
Theorem~\ref{elem emb},
the word ``perfect'' may refer to the topology $t_{F'}$ induced by \emph{any} fragment $F'$ of $\LL{\kappa^+}\kappa$ (see Proposition~\ref{whichever fragment}). 

We remark that when $\kappa$ is a non-weakly compact cardinal, the assumption $\Iminus\kappa$ implies that there are no $K_\kappa$ subsets of 
the $\kappa$-Baire space other than those of size $\leq\kappa$.
However, $K_\kappa$ sets of size $>\kappa$ exist
in the case of the product space $({}^\kappa\kappa,\producttop)$, 
or in the case of the $\kappa$-Baire space when $\kappa$ is weakly compact. (See Remark~\ref{weakly compact}.)

One possible motivation for investigating 
the above questions
for $K_\kappa$ subsets $H$, even in the case of the $\kappa$-Baire space for $\kappa$ non-weakly compact, is
the following.
Let $F$ be a fixed fragment of $\LL{\kappa^+}\kappa$ and equip $\Mod\kappa\psi$  with the topology $t_F$ described above. 
Consider, for each $H\subseteq\inj\kappa$, the $(F,H)$-elementary embeddability relation
$R^F_H$
viewed as a subset of
$\Mod\kappa\psi\times\Mod\kappa\psi$. 
Specifically,
the relation $R^{F}_{\inj\kappa}$ 
(of embeddability by any map preserving $F$)
corresponds to the original question where the set of ``allowed'' embeddings 
``has not been restricted''.
Because the standard base of the space $(\Mod\kappa\psi, t_F)$ is of size $\kappa$, 
it is possible to construct
a subset (even a submonoid) $H$ of $\inj\kappa$ of size $\leq\kappa$ such that
$R^F_H$ is dense in $R^F_{\inj\kappa}$.
On the one hand, the density of
$R^{F}_{H}$ in $R^{F}_{\inj\kappa}$ 
may be interpreted, on an intuitive level, to mean that 
``the action of $H$ on $\Mod\kappa\psi$ is locally similar to the action of $\inj\kappa$''.
On the other hand, $|H|\leq\kappa$ and is therefore a $K_\kappa$ 
subset of the $\kappa$-Baire space, 
which implies that 
our model theoretic dichotomy result Theorem~\ref{elem emb} is applicable in this case as well.
\\[5 pt]
{\bf Notation.}
The notation we use is mostly standard. In particular, for sets $X,Y$ and ordinals $\lambda$, ${}^X Y$ denotes the set of functions from $X$ into $Y$, and ${[X]}^\lambda$ denotes the set of subsets of $X$ which are of size $\lambda$.
Furthermore, $\id X$ denotes the identity function on $X$. When $n\in\omega$, we denote elements of ${}^n X$ by $\bar x$, and $x_i$ denotes the $i^{\textrm{th}}$ coordinate of $\bar x$. When $f\in{}^X Y$, we use the notation $f(\bar x)$ for the element of ${}^n Y$ whose $i^{\textrm{th}}$ coordinate is $f(x_i)$.

Unless otherwise mentioned, we assume the set ${}^\kappa\kappa$ is equipped with the $\kappa$-Baire topology. However, we will sometimes consider the product topology 
on the set ${}^\kappa\kappa$, where $\kappa$ is equipped with the discrete topology; we denote this space by
$({}^\kappa\kappa,\producttop)$.

\section{Independent sets of $\Fsigma$ relations}
\label{Fsigma rels section}
In this section, we prove the main result of the paper, Theorem~\ref{fkappa orders}. Some corollaries of this theorem are stated at the end of the section.

Accordingly, we assume throughout this section that the hypothesis $\Iminus\kappa$ holds. We also assume that $R$ is a $\Fsigma$ relation on an analytic subset $X$ of the $\kappa$-Baire space and that $B\subseteq X$ is an $R$-independent set of size $\kappa^+$. Our goal is to show that under the additional hypothesis of 
$\Diamond_\kappa$ or when $\kappa$ is inaccessible, 
$X$ has a perfect $R$-independent subset.

We start by describing the basic idea of the proof in the special case when $X={}^\kappa\kappa$; the more general version will follow easily from this one.
We are going to construct functions
$(p_\xi\in{}^{<\kappa}\kappa:\xi\in {}^{<\kappa}2)$ 
in such a way that for all $\eta\subseteq\xi\in{}^{<\kappa} 2$ we have that $p_\eta\subseteq p_\xi$ and that $p_{\xi\conc 0}$ and $p_{\xi\conc 1}$ are incomparable; 
closing the set of these $p_\xi$'s under initial subsequences will give us a perfect tree $T$. 
In the case that $\Diamond_\kappa$ holds or when $\kappa$ is inaccessible, 
further conditions can be made on the $p_\xi$'s  to ensure that the 
$\kappa$-branches of $T$ 
form an $R$-independent set. 
In order to be able to construct all the $p_\xi$'s, 
we also make sure that the basic open subsets $N_{p_\xi}$ 
are ``big'' in the following sense. 
By the assumption \Iminus\kappa, we can fix a $\kappa^+$-complete normal 
ideal $\mathcal I$ on the set $B$ and a subset $K$ of $\mathcal I^+$ which is dense and in which every descending chain of length $<\kappa$ has a lower bound. 
We will guarantee the existence of sets $B_\xi\in K$ (for all $\xi\in{}^{<\kappa}2$) such that $B_\xi\subseteq N_{p_\xi}$.

Lemmas~\ref{disjoint} to~\ref{szetval2} below are needed to ensure that these sets and functions 
can be constructed.
When stating these lemmas, we assume that $B$, $\mathcal I$ and $K$ are fixed and satisfy the requirements described in the above paragraph.
Note again that our assumption $\Iminus\kappa$ implies $\kappa^{<\kappa}=\kappa$.

\begin{lemma}\label{disjoint}
  Suppose that $p\in{}^{<\kappa}\kappa$ and $B'\in \mathcal I^+$ are such that $B'\subseteq N_p$. Then there exist $p_0, p_1\supseteq p$ such that
  \begin{enumerate}[1.]
    \item 
      $N_{p_0}\cap N_{p_1}=\emptyset$ 
      (i.e., $p_0\not\subseteq p_1$ and $p_1\not\subseteq p_0$), and
\item $N_{p_i}\cap B'\in \mathcal I^+$ for $i=0,1$.
  \end{enumerate}
\end{lemma}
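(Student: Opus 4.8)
The plan is to use the normality of the ideal $\mathcal I$ together with the fact that $B'\in\mathcal I^+$ is a set of functions in ${}^\kappa\kappa$ (all extending $p$) to locate a ``splitting level'' where $B'$ cannot be concentrated on a single one-point extension of $p$. First I would set $\alpha=\mathrm{dom}\,p<\kappa$ and consider, for each $x\in B'$, the value $x(\alpha)\in\kappa$. This gives a function $f\colon B'\to\kappa$, $f(x)=x(\alpha)$. If there were some $\beta<\kappa$ with $\{x\in B': f(x)=\beta\}\in\mathcal I^+$, then I could try to set $p':=p\conc\langle\beta\rangle$ (the extension of $p$ by the single value $\beta$ at $\alpha$) and replace $B'$ by $\{x\in B': x(\alpha)=\beta\}\cap N_{p'}$, which lies in $\mathcal I^+$ and inside $N_{p'}$; then I would move up one level and repeat. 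The point of the argument is that this process of ``following the $\mathcal I$-large branch'' must terminate, i.e., at some level $\gamma$ with $\alpha\le\gamma<\kappa$ we reach a $q\supseteq p$ with $\mathrm{dom}\,q=\gamma$, $N_q\cap B'\in\mathcal I^+$, but \emph{no} single-value extension $q\conc\langle\beta\rangle$ has $N_{q\conc\langle\beta\rangle}\cap B'\in\mathcal I^+$.

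The reason the process must terminate is that $\kappa$ is regular (indeed $\kappa^{<\kappa}=\kappa$ under $\Iminus\kappa$) and the ideal $\mathcal I$ is $\kappa^+$-complete, hence in particular $\kappa$-complete: if the process ran through all levels $\gamma<\kappa$, we would obtain a single $x^*\in{}^\kappa\kappa$ (the union of the successive $q$'s) and a decreasing sequence of $\mathcal I$-positive sets $N_{q_\gamma}\cap B'$ whose intersection is contained in $\{x^*\}$, a singleton, which is in $\mathcal I$ since $\mathcal I$ is non-principal; but a decreasing $\kappa$-sequence of $\mathcal I^+$-sets cannot have $\mathcal I$-small intersection when $\mathcal I$ is $\kappa^+$-complete (here one uses $\kappa$-completeness to see the intersection is still positive, or argues directly that along the way some level must fail). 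So fix such a $q$ with $\mathrm{dom}\,q=\gamma$ and $B'':=N_q\cap B'\in\mathcal I^+$, where every one-step extension of $q$ meets $B''$ in an $\mathcal I$-small set.

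Now I invoke normality. Define $g\colon B''\to\kappa$ by $g(x)=x(\gamma)$; this is a regressive-type function in the sense that... well, more precisely I use that $B''=\bigcup_{\beta<\kappa}\{x\in B'': x(\gamma)=\beta\}$ and each piece is in $\mathcal I$. A $\kappa^+$-complete ideal cannot be the union of $\kappa$-many of its members, so in fact this is already a contradiction \emph{unless} we have been careful: this shows that the terminal stage cannot actually occur either, which means the real content is that we should \emph{stop one step earlier}. Let me instead phrase the construction as: by the above, there is a level $\gamma$ and $q\supseteq p$ with $\mathrm{dom}\,q=\gamma$, $N_q\cap B'\in\mathcal I^+$, and \emph{at least two} distinct values $\beta_0\ne\beta_1<\kappa$ with $N_{q\conc\langle\beta_i\rangle}\cap B'\in\mathcal I^+$ for $i=0,1$ --- because $N_q\cap B'$, being $\mathcal I$-positive and equal to the union over $\beta<\kappa$ of the (possibly positive) pieces $N_{q\conc\langle\beta\rangle}\cap B'$, cannot have all but one of those pieces in $\mathcal I$ (that would force, by $\kappa^+$-completeness applied to the complement, the single remaining piece to be positive and we could keep going, but iterating this forever gives the singleton contradiction above). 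Granting two such values $\beta_0,\beta_1$, I set $p_i:=q\conc\langle\beta_i\rangle$ for $i=0,1$. Then $p_0,p_1\supseteq q\supseteq p$, and since $\beta_0\ne\beta_1$ we have $p_0\not\subseteq p_1$ and $p_1\not\subseteq p_0$, so $N_{p_0}\cap N_{p_1}=\emptyset$, giving (1); and $N_{p_i}\cap B'=N_{p_i}\cap B'\cap N_q\in\mathcal I^+$ gives (2).

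The main obstacle is making the ``must eventually split'' argument clean: one has to rule out the scenario in which, at every level, the whole $\mathcal I$-positive mass lies on a single successor node, and the right tool is the combination of $\kappa^+$-completeness (so $N_q\cap B'$ is not the union of $\kappa$-many $\mathcal I$-small sets, forcing \emph{some} successor node positive) together with the non-principality of $\mathcal I$ (so the ``always exactly one positive successor'' scenario would, after $\kappa$ steps, exhibit a singleton as an intersection of a decreasing $\kappa$-chain of positive sets, contradicting $\kappa$-completeness). I do not think normality of $\mathcal I$ is actually needed for this particular lemma --- $\kappa^+$-completeness and non-principality suffice --- though it will be used in the later lemmas; I would double-check whether the authors phrase it with normality only for uniformity.
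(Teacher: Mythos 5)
Your proof is correct and is essentially the paper's argument: the paper likewise assumes no splitting occurs, uses $\kappa^+$-completeness (and $|{}^\alpha\kappa|\leq\kappa$) to get a unique $\mathcal I$-positive extension $s_\alpha$ of $p$ at each level $\alpha$ with $B'-N_{s_\alpha}\in\mathcal I$, and derives the contradiction $B'\subseteq\{s\}\cup\bigcup_\alpha(B'-N_{s_\alpha})\in\mathcal I$ from non-principality and $\kappa^+$-completeness. Your side remark is also accurate: the paper's proof uses only $\kappa^+$-completeness and non-principality, not normality (your level-by-level framing just needs the routine observation that at limit levels $B'-N_{q_\lambda}$ remains small, which the paper sidesteps by defining $s_\alpha$ directly at every level).
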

\begin{proof}
Assume, seeking a contradiction, that for $p$ and $B'$ as above, no such $p_0, p_1$ exist. For any $\alpha<\kappa$ such that $\dom p<\alpha$, we have, by $B'\subseteq N_p$, that
\[
  B'=\bigcup\{N_s\cap B':p\subset s\in{}^\alpha\kappa\}.
\]
Thus, because $\mathcal I$ is a $\kappa^+$-complete ideal, $B'\in\mathcal I^+$ and $|{ }^\alpha\kappa|\leq\kappa$, our assumption implies that
there exists exactly one $s_\alpha\in{}^\alpha 2$ extending $p$ such that 
\[N_{s_\alpha}\cap B'\in\mathcal I^+ \textrm{ and }B'-N_{s_\alpha}\in \mathcal I.\] 
Our assumption also implies that we must have $s_\beta\subset s_\alpha$
for all $\dom p<\beta<\alpha<\kappa$,
and therefore
$s=\bigcup_{\dom p<\alpha<\kappa}s_\alpha$ is an element of ${}^\kappa\kappa$. Then $\bigcap_{\dom p<\alpha<\kappa} N_{s_\alpha}=\{s\}$, 
 and so, 
 \[ 
    B'\subseteq\{s\}\cup\bigcup_{\dom p<\alpha<\kappa}(B'-N_{s_\alpha})\in\mathcal I,
  \]
by the $\kappa^+$-completeness and the non-principality of the ideal $\mathcal I$, contradicting the assumptions of the lemma.
\end{proof}
   In the next two lemmas, $0<n<\omega$ and $S$ denotes a closed subset of $\,{}^n({}^{\kappa}\kappa)$ such that $B$ is $S$-independent. 
 \begin{lemma}\label{szetval} 
   Suppose that $p_0,\dots,p_{n-1}\in {}^{<\kappa}\kappa$ and $B_0,\dots, B_{n-1}\in \mathcal I^+$ are such that 
   \[\textrm{$B_i\subseteq N_{p_i}$ and
     $p_i\not\subseteq p_j$
    for all $i,j<n$ with $i\neq j$.} 
  \]
  Then there exist $q_0,\dots,q_{n-1}\in{}^{<\kappa}\kappa$ and $B'_0,\dots,B'_{n-1}\in K$ for which
  \begin{enumerate}[1.]
    \item$(N_{q_0}\times\ldots\times N_{q_{n-1}})\cap S=\emptyset$, 
    \item $p_i\subseteq q_i$ for all $i<n$, and
    \item $B'_i\subseteq N_{q_i}\cap B_i$ for all $i<n$.
\end{enumerate}
\end{lemma}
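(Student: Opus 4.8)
The plan is to exploit the fact that $B$ is $S$-independent together with the closedness of $S$, refining the $p_i$'s one coordinate at a time. First I would pick, for each $i<n$, an element $x_i\in B_i$; since the $p_i$ are pairwise incomparable, the points $x_0,\dots,x_{n-1}$ are pairwise distinct, so by $S$-independence of $B$ we have $(x_0,\dots,x_{n-1})\notin S$. Since $S$ is closed in the product space ${}^n({}^\kappa\kappa)$ and $\kappa^{<\kappa}=\kappa$, there is an ordinal $\alpha<\kappa$ with $\dom p_i<\alpha$ for all $i<n$ such that the basic open box $N_{x_0\restr\alpha}\times\cdots\times N_{x_{n-1}\restr\alpha}$ misses $S$. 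Set $r_i=x_i\restr\alpha$; then $p_i\subseteq r_i$, the box $\prod_i N_{r_i}$ is disjoint from $S$, and $x_i\in N_{r_i}\cap B_i$, so $N_{r_i}\cap B_i\in\mathcal I^+$ for every $i$ (it contains a point of $B_i$, hence is nonempty; more carefully, I should choose $\alpha$ so that each $N_{r_i}\cap B_i$ stays positive, which is automatic once we keep track of the appropriate measure-one piece — see the next paragraph).

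The subtle point is that merely containing one point does not make $N_{r_i}\cap B_i$ an element of $\mathcal I^+$. So instead of choosing arbitrary points $x_i$, I would first apply Lemma~\ref{disjoint} (or rather its proof technique, the existence of a ``majority branch'' $s_\alpha$) to locate, inside each $B_i$, a point $x_i$ through which the $\mathcal I$-positive mass of $B_i$ concentrates. Concretely: since $B_i\in\mathcal I^+$ and $\mathcal I$ is $\kappa^+$-complete, for each level $\beta$ there is a (unique up to an $\mathcal I$-null error) node $s^i_\beta\supseteq p_i$ with $N_{s^i_\beta}\cap B_i\in\mathcal I^+$; these cohere into some $x_i\in{}^\kappa\kappa$ (arguing exactly as in Lemma~\ref{disjoint}, unless the coherence fails, in which case Lemma~\ref{disjoint} itself already gives a splitting and we are in an easier situation). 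Having fixed such $x_i$ with $N_{x_i\restr\beta}\cap B_i\in\mathcal I^+$ for every $\beta<\kappa$, I then run the closedness argument of the previous paragraph to get $\alpha$ with $\prod_i N_{x_i\restr\alpha}$ disjoint from $S$, and put $q_i=x_i\restr\alpha$. Then conditions~1 and~2 hold, and $N_{q_i}\cap B_i\in\mathcal I^+$.

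Finally, to get condition~3 with $B'_i\in K$ rather than just in $\mathcal I^+$: since $K$ is dense in $\mathcal I^+$, for each $i<n$ choose $B'_i\in K$ with $B'_i\subseteq N_{q_i}\cap B_i$. This gives all three conclusions. The main obstacle I anticipate is the bookkeeping in the second paragraph: ensuring that the points $x_i$ can be chosen to be ``$\mathcal I$-generic'' for $B_i$ in the sense that every initial segment cuts out a positive set, and handling cleanly the case where the coherent branch $x_i$ does not exist (where one should instead note that Lemma~\ref{disjoint} applies and the required refinement is even easier, since we can split $B_i$ into two positive pieces and keep whichever one we need). Everything else — the use of closedness of $S$ to get the finite level $\alpha$, and the density of $K$ — is routine.
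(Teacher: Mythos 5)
Your first paragraph correctly isolates the real difficulty, but the fix you propose in the second paragraph has a genuine gap. You want, for each $i$, a point $x_i\in B_i$ such that $N_{x_i\restr\beta}\cap B_i\in\mathcal I^+$ for every $\beta$, and you justify its existence by a ``coherence'' argument borrowed from Lemma~\ref{disjoint}. That argument does not transfer: in Lemma~\ref{disjoint} the nodes $s_\alpha$ are unique and coherent only because of the reductio hypothesis that no splitting exists (their coherence is exactly what produces the contradiction there). In general a level $\beta$ may carry many pairwise incomparable nodes $s$ with $N_s\cap B_i\in\mathcal I^+$, so there is no canonical $s^i_\beta$, no reason for chosen positive nodes to cohere into a $\kappa$-branch, and --- even if they did --- no reason for the resulting $x_i\in{}^\kappa\kappa$ to lie in $B_i$. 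This last point matters because your appeal to the $S$-independence of $B$ to conclude $(x_0,\dots,x_{n-1})\notin S$ requires the $x_i$ to be (pairwise distinct) elements of $B$. The fallback ``if coherence fails, Lemma~\ref{disjoint} gives a splitting and we are in an easier situation'' is not an argument: splitting $B_i$ into two positive pieces does not produce the node $q_i$ you need.

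The fact you want is nonetheless true, with a one-line proof you did not give: there are only $\kappa^{<\kappa}=\kappa$ many nodes $s$ with $N_s\cap B_i\in\mathcal I$, so by $\kappa^+$-completeness the union of the corresponding sets $N_s\cap B_i$ is in $\mathcal I$; removing it from $B_i$ leaves a set in $\mathcal I^+$, and any element $x_i$ of that set lies in $B_i$ and has every initial segment positive. With that substituted for your second paragraph, your proof goes through. The paper reaches the same end by a different route: it assigns to \emph{every} tuple $\bar x\in B_0\times\cdots\times B_{n-1}$ a witness $\bar q(\bar x)$ coming from the closedness of $S$, and then stabilizes $\bar q$ coordinate by coordinate using $\kappa^+$-completeness (there are only $\kappa$ many possible values), so that the stabilized $q_i$ automatically satisfies $\{x\in B_i:q_i\subseteq x\}\in\mathcal I^+$.
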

\begin{proof}
  Take an arbitrary $\bar x=(x_0,\dots,x_{n-1})\in B_0\times\ldots\times B_{n-1}$. The $x_i$ are pairwise distinct elements of the $S$-independent set $B$, and therefore $\bar x\notin S$. 
 Thus, since $S$ is closed and $x_i\in B_i\subseteq N_{p_i}$, 
 there exists a 
 $\bar q(\bar x)=(q_0(\bar x),\dots, q_{n-1}(\bar x))\in{}^n({}^{<\kappa}\kappa)$,
  such that 
    {$p_i\subseteq q_i(\bar x)\subseteq x_i$, and }
  \[
  N_{q_0(\bar x)}\times\ldots\times N_{q_{n-1}(\bar x)}\subseteq{}^n({}^{\kappa}\kappa)- S.
  \]

  Fix $\bar y\in B_0\times\ldots\times B_{n-2}$. Since the ideal $\mathcal I$ is $\kappa^+$-complete and does not contain $B_{n-1}$, and also because there are $\kappa^{<\kappa}=\kappa$ many possibilities for the $\bar q_i(\bar x)$'s, there exists $\bar q(\bar y)\in {}^n({}^{<\kappa}\kappa)$ such that the set
  \[
    A(\bar y)=\{x\in B_{n-1}: \bar q(\bar y,x)=\bar q(\bar y)\}
    \textrm{ is in }
    \mathcal I^+. 
  \]
  Similarly, using induction on $k\leq n$, we can define, for all $\bar y\in {B_0\times\ldots\times B_{n-(k+1)}}$
  elements $\bar q(\bar y)$ of ${}^n({}^{<\kappa}\kappa)$ for which 
   \[
    A(\bar y)=\{x\in B_{n-k}: \bar q(\bar y,x)=\bar q(\bar y)\}
    \textrm{ is in }
    \mathcal I^+. 
  \]
  Finally, for $k=n$, we obtain $\bar q=(q_0,\dots,q_{n-1})\in{}^n({}^{<\kappa}\kappa)$ and 
  $A=\{x\in B_0: \bar q(x)=\bar q\}$.
  
  To see that $q_0,\dots, q_{n-1}$ will satisfy the requirements of the lemma, define an element $\bar y=(y_0,\dots,y_{n-1})\in B_0\times\ldots\times B_{n-1}$ such that $\bar q=\bar q(\bar y)$, as follows: let $y_0\in A$ be arbitrary, and if $1\leq i\leq n-1$ and $y_0,\dots, y_{i-1}$ have been defined, then let $y_i$ be an arbitrary element of 
  $A(y_0,\dots, y_{i-1})$.
  Then, using that $\bar q=\bar q(\bar y)$, we have $p_i\subseteq q_i$ and 
$N_{q_0}\times\ldots\times N_{q_{n-1}}\subseteq {}^n{({}^\kappa\kappa)} - S$.

Furthermore, for $0\leq i\leq n-1$, the set $A(y_0,\dots, y_{i-1})$ is a subset of $B_i$ by definition, and also of $N_{q_i}$, because of the following: for all $x\in A(y_0,\dots, y_{i-1})$, we can find $\bar z\in B_{i+1}\times\ldots\times B_{n-1}$  such that 
\[
\bar q(y_0,\dots, y_{i-1},x,\bar z)=\bar q(y_0,\dots, y_{i-1},x)=\bar q,
\]
implying that $q_i\subseteq x$.
Therefore, by $A(y_0,\dots, y_{i-1})\in\mathcal I^+$ and the density of $K$ in $\mathcal I^+$, there exists a $B'_i\in K$ such that $B'_i\subseteq B_i\cap N_{q_i}$. 
\end{proof}

The next lemma will aid us in proving the theorem for $\Fsigma$ relations in the case $\kappa$ is inaccessible. 
\begin{lemma}\label{szetval2}
  Suppose $\mu<\kappa$ is a cardinal, and $(p_\alpha\in{}^{<\kappa}\kappa: \alpha<\mu)$ and $(B_\alpha\in\mathcal I^+:\alpha<\mu)$ are such that 
  \[\textrm{$B_\alpha\subseteq N_{p_\alpha}$ and
  $p_\alpha\not\subseteq p_\beta$}
    \textrm{ for all $\alpha,\beta<\mu$ with $\alpha\neq\beta$.} 
  \]
  Then there exist, for all $\alpha<\mu$, elements $q_\alpha\in{}^{<\kappa}\kappa$ and $B'_\alpha\in K$ for which 
 \begin{enumerate}[1.]
    \item $(N_{q_{\alpha_0}}\times\ldots\times N_{q_{\alpha_{n-1}}})\cap S=\emptyset$ for all pairwise different $\alpha_0,\dots,\alpha_{n-1}<\mu$,
    \item $p_\alpha\subseteq q_\alpha$ for all $\alpha<\mu$, and
    \item $B'_\alpha\subseteq N_{q_\alpha}\cap B_\alpha$ for all $\alpha<\mu$.
  \end{enumerate}
\end{lemma}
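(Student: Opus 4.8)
The plan is to obtain the lemma by iterating Lemma~\ref{szetval} transfinitely, once for each ordered $n$-tuple of indices below $\mu$, taking care that the constraints already secured for the tuples treated so far survive the subsequent modifications of the $p_\alpha$'s and $B_\alpha$'s. Since $\mu<\kappa$ and $\kappa^{<\kappa}=\kappa$ (hence $\kappa$ is regular), the set $I$ of injective $\bar\alpha\in{}^n\mu$ has cardinality $<\kappa$; fix an enumeration $I=\{\bar\alpha^i:i<\delta\}$ with $\delta<\kappa$, where $\bar\alpha^i=(\alpha^i_0,\dots,\alpha^i_{n-1})$. First I would use the density of $K$ in $\mathcal I^+$ to replace each $B_\alpha$ by a subset $B^0_\alpha\in K$ (still $B^0_\alpha\subseteq N_{p_\alpha}$), and put $p^0_\alpha=p_\alpha$. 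Then I would construct, by recursion on $i\le\delta$, functions $p^i_\alpha\in{}^{<\kappa}\kappa$ and sets $B^i_\alpha\in K$ (for all $\alpha<\mu$) such that: $p^j_\alpha\subseteq p^i_\alpha$ and $B^i_\alpha\subseteq B^j_\alpha$ whenever $j\le i$; $B^i_\alpha\subseteq N_{p^i_\alpha}$; $p^i_\alpha\not\subseteq p^i_\beta$ for $\alpha\neq\beta$; and, for every $j<i$, the box $N_{p^i_{\alpha^j_0}}\times\dots\times N_{p^i_{\alpha^j_{n-1}}}$ is disjoint from $S$. The last two conditions are monotone under end-extension of the $p_\alpha$'s: the incomparability persists because the $p_\alpha$ are pairwise incomparable to begin with, and disjointness of a box from $S$ persists because end-extending the coordinates only shrinks the factors $N_{p_\alpha}$.

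At the successor step $i+1$ I would apply Lemma~\ref{szetval} to $(p^i_{\alpha^i_0},\dots,p^i_{\alpha^i_{n-1}})$ and $(B^i_{\alpha^i_0},\dots,B^i_{\alpha^i_{n-1}})$ — its hypotheses hold since $K\subseteq\mathcal I^+$ and the induction hypotheses supply the needed inclusions and incomparabilities (and $B$ is $S$-independent in every ordering of its elements, so any ordering of the chosen indices is admissible) — obtaining $q_l$ and $B'_l\in K$; I then set $p^{i+1}_{\alpha^i_l}=q_l$ and $B^{i+1}_{\alpha^i_l}=B'_l$ for $l<n$ and leave the remaining coordinates unchanged. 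The new disjointness requirement for $j=i$ is precisely the conclusion of Lemma~\ref{szetval}, while those for $j<i$ survive by the monotonicity noted above. At a limit step $i$ I set $p^i_\alpha=\bigcup_{j<i}p^j_\alpha$, which lies in ${}^{<\kappa}\kappa$ since $\kappa$ is regular and $i<\kappa$, and I let $B^i_\alpha\in K$ be a lower bound of the $\subseteq$-descending sequence $\langle B^j_\alpha:j<i\rangle$; then $B^i_\alpha\subseteq\bigcap_{j<i}N_{p^j_\alpha}=N_{p^i_\alpha}$, and the disjointness conditions again pass to the limit. Finally, $q_\alpha:=p^\delta_\alpha$ and $B'_\alpha:=B^\delta_\alpha$ satisfy the lemma: condition~1 holds because every tuple of pairwise distinct ordinals below $\mu$ equals some $\bar\alpha^j$; condition~2 is immediate; and $B'_\alpha\subseteq B^0_\alpha\subseteq B_\alpha$ together with $B'_\alpha\subseteq N_{q_\alpha}$ gives condition~3.

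The one genuinely delicate point — and the reason the full hypothesis $\Iminus\kappa$ is used here rather than merely $\kappa^{<\kappa}=\kappa$ — is the limit step: one must be able to pass to a limit of the sets $B^j_\alpha$ that still lies in $K$, and this is exactly what the clause of $\Iminus\kappa$ guaranteeing lower bounds in $K$ for descending sequences of length $<\kappa$ provides. Everything else is bookkeeping: checking that the recursion has length $<\kappa$ (so the $p$'s can be unioned and every limit met has length $<\kappa$) and that incomparability and the already-secured disjointness from $S$ behave monotonically under the end-extensions performed at the successor and limit stages.
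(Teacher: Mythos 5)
Your proposal is correct and follows essentially the same route as the paper: a transfinite iteration of Lemma~\ref{szetval} over an enumeration of the injective tuples of indices, using end-extension monotonicity to preserve earlier disjointness constraints and the $<\kappa$-closure of $K$ to pass through limit stages. The paper writes out the $n=2$ case for notational simplicity, but the argument and the identified delicate point (the limit stage) are the same.
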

\begin{proof} To prove this lemma, we basically iterate the application of the previous one as many times as necessary (being careful that the iteration goes through at limit stages); the details are below. To simplify notation, we assume $n=2$.
 
 Let $\big((\alpha_\sigma, \beta_\sigma):\sigma<\mu'\big)$ be an enumeration of $\{(\alpha,\beta):\alpha,\beta\in \mu,\,\alpha\neq\beta\}$,
 where $\mu'=\mu$ in the case that $\mu\geq\omega$, and $\mu'=\mu^2-\mu$ for $\mu<\omega$.
 By transfinite induction for $\sigma\leq\mu'$, we define
 \[
   p^\sigma_\alpha\in{}^{<\kappa}\kappa \textrm{ and } B^\sigma_\alpha\in K
   \textrm{ for all $\alpha<\mu$ simultaneously,}
\]
in such a way that the three properties below are true for all $\alpha<\mu$ and 
$\tau<\sigma\leq\mu'$. 
For simplicity, we use the notation $P^\sigma_\alpha=N_{p^\sigma_\alpha}$.
\begin{enumerate}[(i.)]
  \item $(P^{\tau+1}_{\alpha_\tau}\times P^{\tau+1}_{\beta_\tau})\cap S=\emptyset$.
  \item $p_\alpha\subseteq p^\tau_\alpha\subseteq p^\sigma_\alpha$ and 
    $B_\alpha\supseteq B^\tau_\alpha\supseteq B^\sigma_\alpha$,
  \item $B^\sigma_\alpha\subseteq P^\sigma_\alpha$.
\end{enumerate}
This is enough, because once these sequences have been defined, 
$q_\alpha=p^{\mu'}_\alpha$ and $B'_\alpha=B^{\mu'}_\alpha$ will satisfy the conclusions of the lemma.
To see that item~\textrm{1} of the conclusion of the lemma is satisfied, note that we have $N_{q_\alpha}=P^{\mu'}_\alpha\subseteq P^{\tau+1}_\alpha$ for all $\alpha<\mu, \tau<\mu'$, and use property (i.) above.

To build the sequences, we start from $p^0_\alpha=p_\alpha$ (and $P^0_\alpha=N_{p_\alpha}$) and some $B^0_\alpha\in K$ such that $B^0_\alpha\subseteq B_\alpha$. For limit ordinals $\sigma$, we define 
\[
  p^\sigma_\alpha=\bigcup_{\tau<\sigma}p^\tau_\alpha
  \textrm{ (and thus }
  P^\sigma_\alpha=\bigcap_{\tau<\sigma}P^\tau_\alpha).
\]
Then, by $B^\tau_\alpha\in K$ and the closure property of $K$, 
there exists $B^\sigma_\alpha\in K$ such that $B^\sigma_\alpha\subseteq\bigcap_{\tau<\sigma}B^\tau_\alpha$.
Therefore, using that $B^\tau_\alpha\subseteq P^\tau_\alpha$ for all $\tau<\sigma$ and the definition of $p^\sigma_\alpha$, 
we also have 
$B^\sigma_\alpha\subseteq P^\sigma_\alpha$.

In the case of a successor ordinal $\sigma+1$, we define the elements $p^{\sigma+1}_{\alpha_\sigma}, p^{\sigma+1}_{\beta_\sigma}$ of ${}^{<\kappa}\kappa$ and the sets $B^{\sigma+1}_{\alpha_\sigma}, B^{\sigma+1}_{\beta_\sigma}$ to be those obtained from 
an application of Lemma~\ref{szetval} to the functions $p^\sigma_{\alpha_\sigma}, p^\sigma_{\beta_\sigma}$ and the sets $B^\sigma_{\alpha_\sigma}, B^\sigma_{\beta_\sigma}$.
For $\alpha_\sigma\neq\gamma\neq\beta_\sigma$, we leave everything unchanged, i.e., we let $p^{\sigma+1}_\gamma=p^\sigma_\gamma$ and $B^{\sigma+1}_\gamma=B^\sigma_\gamma$. Then these definitions will satisfy all three required properties.
\end{proof}

We now state and prove the main result of our paper.
\begin{theorem}\label{fkappa orders} 
  Assume $\Iminus\kappa$ and either that $\Diamond_\kappa$ or that $\kappa$ is inaccessible. 
  \\[5 pt]  
  Suppose $R$ is a $\Fsigma$ relation on an analytic subset of the $\kappa$-Baire space ${}^\kappa\kappa$.
  Then either all $R$-independent sets are of size $\leq\kappa$,  or there exists a perfect $R$-independent set.
\end{theorem}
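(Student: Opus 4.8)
The plan is to work under the hypotheses, keep the paper's standing assumptions (so $B\subseteq X$ is $R$-independent with $|B|=\kappa^+$, and $\mathcal I$, $K$ are the ideal on $B$ and the dense $<\kappa$-closed subset of $\mathcal I^+$ furnished by $\Iminus\kappa$), and produce a perfect $R$-independent subset of $X$. First I would treat the case $X={}^\kappa\kappa$. Write $R=\bigcup_{\zeta<\kappa}S_\zeta$ with each $S_\zeta$ closed in ${}^n({}^\kappa\kappa)$; since $S_\zeta\subseteq R$, the set $B$ is $S_\zeta$-independent for every $\zeta$. By recursion on $\alpha<\kappa$ I would construct $p_\xi\in{}^{<\kappa}\kappa$ and $B_\xi\in K$ for $\xi\in{}^\alpha 2$ so that $p_\eta\subseteq p_\xi$ and $B_\eta\supseteq B_\xi$ whenever $\eta\subseteq\xi$, so that $B_\xi\subseteq N_{p_\xi}$, so that $p_\xi$ has length $\geq\alpha$, and so that $p_{\xi\conc 0}$ and $p_{\xi\conc 1}$ are incomparable. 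At successor steps Lemma~\ref{disjoint} applied to $p_\xi$ and $B_\xi\in\mathcal I^+$, followed by the density of $K$, produces the two incomparable extensions together with $K$-large subsets of $B_\xi$ lying below them; at limit steps one puts $p_\xi=\bigcup_\beta p_{\xi\restr\beta}$ and uses the $<\kappa$-closure of $K$ to find $B_\xi\in K$ with $B_\xi\subseteq\bigcap_\beta B_{\xi\restr\beta}$, which then automatically lies in $N_{p_\xi}=\bigcap_\beta N_{p_{\xi\restr\beta}}$ (this uses $\kappa^{<\kappa}=\kappa$, which follows from $\Iminus\kappa$). Closing the $p_\xi$'s under initial segments yields a perfect tree $T$; the map $\eta\mapsto x_\eta=\bigcup_\alpha p_{\eta\restr\alpha}$ is an injection of ${}^\kappa 2$ onto $[T]$, and since each $B_\xi$ is a nonempty subset of $B$ we get $[T]\subseteq\overline B$.

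To force $[T]$ to be $R$-independent I would interleave corrections into the recursion. The key observation is that if $x_{\eta_0},\dots,x_{\eta_{n-1}}$ are distinct branches of $T$ and $\alpha$ is any level past which the $\eta_i$ are pairwise distinct, then the $x_{\eta_i}$ pass through the pairwise incomparable nodes $p_{\eta_i\restr\alpha}$, so a single instance of disjointness
\[(N_{p_{\eta_0\restr\alpha}}\times\dots\times N_{p_{\eta_{n-1}\restr\alpha}})\cap S_\zeta=\emptyset\]
already forces $(x_{\eta_0},\dots,x_{\eta_{n-1}})\notin S_\zeta$, and the disjointness persists at deeper levels. When $\kappa$ is inaccessible, $|{}^\alpha 2|<\kappa$, so at each level $\alpha$ I would iterate Lemma~\ref{szetval2} over all $\zeta<\alpha$ — a $<\kappa$-length iteration whose limit stages go through by the $<\kappa$-closure of $K$ exactly as above — after which every box over a tuple of distinct level-$\alpha$ nodes is disjoint from each $S_\zeta$ with $\zeta<\alpha$; since any $n$-tuple of branches is separated at some level and any fixed $\zeta$ is eventually below $\alpha$, this makes $[T]$ $R$-independent. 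When only $\Diamond_\kappa$ is available a level $\alpha$ may have size $\kappa$, so instead I would fix a $\Diamond_\kappa$-sequence together with a (standard) coding of $n$-tuples from ${}^\kappa 2$ and an index from $\kappa$, and at stage $\alpha$ let the $\Diamond$-guess decode, when possible, to a single configuration $(s_0,\dots,s_{n-1},\zeta)$ with the $s_i\in{}^\alpha 2$ pairwise distinct and $\zeta<\alpha$, handling only that configuration by one application of Lemma~\ref{szetval} to $p_{s_0},\dots,p_{s_{n-1}}$ (which are incomparable, being distinct nodes of the same level) and leaving the other level-$\alpha$ nodes untouched. By the prediction property of $\Diamond_\kappa$, for every $n$-tuple of branches of $T$ and every $\zeta<\kappa$ there is a level $\alpha$ at which the tuple is already separated and the guess is the code of the corresponding configuration, so the displayed disjointness is secured for that tuple; hence $[T]$ is $R$-independent.

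For general analytic $X$ I would reduce to the above. Fix a closed $C\subseteq{}^\kappa\kappa$ and a continuous surjection $g\colon C\to X$, and lift $B$ to $B'\subseteq C$ with $g\restr B'$ a bijection onto $B$. The pullback $R'=\{\bar c\in{}^n C:g(\bar c)\in R\}$ is $\Fsigma$ (as $g$ is continuous and ${}^n C$ is closed) and $B'$ is $R'$-independent, while the ``collision'' relation $E=\{\bar c\in{}^n C: g(c_i)=g(c_j)\text{ for some }i\neq j\}$ is closed (the diagonal of ${}^\kappa\kappa$ is closed since $\kappa^{<\kappa}=\kappa$) and, because $g\restr B'$ is injective, $B'$ is $E$-independent as well. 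Running the construction above with the $\Fsigma$ relation $R'\cup E$ and the $(R'\cup E)$-independent set $B'$, and noting that the resulting $B_\xi\subseteq B'$ force $[T]\subseteq\overline{B'}\subseteq C$, produces a perfect $[T]\subseteq C$ that is independent for both $R'$ and $E$; the latter makes $g\restr[T]$ injective, so $g[[T]]$ is a perfect subset of $X$, and the former makes $g[[T]]$ an $R$-independent set.

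I expect the main obstacle to be the $\Diamond_\kappa$-bookkeeping: one must set up the coding so that, for the tree $T$ that is only determined at the end of the recursion, every $n$-tuple of its branches together with every index $\zeta$ is guessed correctly by the fixed $\Diamond_\kappa$-sequence at \emph{some} level which already separates that tuple — the standard phenomenon that $\Diamond_\kappa$ anticipates the object being built — and to verify that performing the single-configuration correction via Lemma~\ref{szetval} at that level does not disturb the part of $T$ already constructed. The inaccessible case avoids this by brute force, handling all tuples at every (necessarily small) level, and the analytic-to-Baire reduction is routine once $E$ is folded into the relation.
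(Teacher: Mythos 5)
Your proposal is correct and follows essentially the same route as the paper: the same tree of conditions $(p_\xi,B_\xi)$ built from Lemmas~\ref{disjoint}--\ref{szetval2} using the ideal from $\Iminus\kappa$, the same split between handling all $<\kappa$ many level-$\alpha$ nodes when $\kappa$ is inaccessible and using a $\Diamond_\kappa$-style guessing sequence to correct one tuple per level otherwise, and the same pullback reduction from analytic $X$ to the full space. The only (harmless) variations are bookkeeping ones: you guess the index $\zeta$ along with the tuple rather than clearing $\bigcup_{\gamma<\alpha}R_\gamma$ at once, and you fold the collision relation into the pulled-back relation and use $[T]\subseteq\overline{B'}\subseteq C$ in place of the paper's intermediate closed-$X$ step and its device of adding $\Delta_n$ to $R$.
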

We note that, as was mentioned in the Introduction, $\Iminus\kappa$ implies $\Diamond_\kappa$ for successor cardinals $\kappa>\aleph_1$. Furthermore, when $R$ is a closed binary relation on $X$, the above dichotomy follows from $\Iminus\kappa$ alone; see Remark~\ref{closed rels} below.
\begin{proof} 
  As throughout the rest of this section, we assume \Iminus\kappa. 
  We start by proving the theorem in the case that $R$ is a $\Fsigma$ relation on the whole space ${}^\kappa\kappa$. 
  Suppose $R$ is $n$-ary (where $0<n<\omega$), and that
  $(R_\alpha:\alpha<\kappa)$ are closed subsets of ${}^n({}^\kappa\kappa)$ such that $R=\bigcup_{\alpha<\kappa}R_\alpha$. 
  Let $B$ be an independent set of $R$ of size $\kappa^+$.
  By the hypothesis $\Iminus\kappa$, there is a $\kappa^+$-complete normal 
  ideal $\mathcal I$ on $B$ and a subset $K$ of $\mathcal I^+$ which is dense and in which every descending chain of length $<\kappa$ has a lower bound. 

  First, assume $\Diamond_\kappa$. Then, the following combinatorial principle 
  also holds:
\begin{quotation}\noindent
  there exists a sequence of $n$-tuples
\[
\big((x^0_\alpha,\dots, x^{n-1}_\alpha)\in{}^n({}^\alpha 2):\,\alpha<\kappa\big)
\]
such that
$x^i_\alpha\neq x^j_\alpha$ for all $i<j<n,\,\alpha<\kappa$, and 
for all $(x_0,\dots,x_{n-1})\in{}^n({}^\kappa 2)$ 
with $x_i\neq x_j$ for all $i<j<n$ we have that
the set
$\{\alpha\in\kappa: x^i_\alpha= {x_i\restr{\alpha}}\textrm{ for all }i<n\}$ is cofinal in~$\kappa$. 
\end{quotation}
We note that whenever $\kappa$ is not inaccessible, the above combinatorial principle is equivalent to $\Diamond_\kappa$, by the Proposition in \cite{MatetCorrigendumDiamond}. (See also \cite{DevlinVariationsDiamond} and \cite{PiolunowiczDiamond}, from which this equivalence follows when $\kappa=\omega_1$ and when $\kappa$ is a successor cardinal, respectively.)

We define sets $(B_\xi\in K: \xi\in {}^{<\kappa}2)$ and functions $(p_\xi\in{}^{<\kappa}\kappa: \xi\in {}^{<\kappa}2)$ 
  in such a way that the items below are satisfied for all $\xi,\eta\in {}^{<\kappa}2$ and $\alpha<\kappa$. We use the notation $P_\xi=N_{p_\xi}$.
  \begin{enumerate}[1.]
    \item $B_\xi\subseteq P_\xi$;
    \item if $\eta\subseteq \xi$, then $B_\eta\supseteq B_\xi$ and $p_\eta\subseteq p_\xi$;
    \item if $\xi\neq\eta\in {}^\alpha 2$, then 
    $P_\xi\cap P_\eta=\emptyset$ (i.e.,
    $p_\xi$ and $p_\eta$ are incomparable);
    \item for all $\gamma<\alpha$ we have 
      \[(P_{x^0_\alpha}\times\ldots\times P_{x^{n-1}_\alpha})\cap R_\gamma=\emptyset
      \]
      (i.e., for any $\bar t\in{}^n({}^\kappa\kappa)$ such that $t_i\supseteq p_{x^i_\alpha}$ for all $i<n$, we have $\bar t\not\in R_\gamma$).
\end{enumerate}
By the second and third conditions, the map 
$t:{}^\kappa 2\longrightarrow{}^\kappa\kappa;
\,x\mapsto\bigcup_{\alpha<\kappa}p_{x\restriction\alpha}$ 
is a continuous injection whose image $\textit{Im}(t)$ is a perfect subset of ${}^\kappa\kappa$.
Furthermore, item~4 guarantees that 
$\textit{Im}(t)$
will be $R$-independent:
if $x_0,\dots, x_{n-1}\in {}^\kappa 2$ are pairwise distinct and $\gamma<\kappa$ is arbitrary, 
then there exists a $\gamma<\alpha<\kappa$ such that 
$x^i_\alpha={x_i\restr\alpha}$ for all $i<n$, 
and therefore, by 4, we have that $\big(t(x_0),\dots, t(x_{n-1})\big)\notin R_\gamma$.
  
  Thus, it is enough to see that sets $B_\xi$ and functions $p_\xi$ satisfying the conditions above can indeed be constructed. 
To guarantee the first three conditions, we will use Lemma~\ref{disjoint} and the density of $K$ at successor stages, and we will use the closure property of $K\subseteq\mathcal{I}^+$ at limit stages. 
Then, the fourth property can be guaranteed 
using Lemma~\ref{szetval}. The details are below.

Let $p_\emptyset=\emptyset$ and let $B_\emptyset\in K$ be arbitrary. Now, fix $\alpha<\kappa$ and suppose that $p_\eta$ and $B_\eta$ have been defined for all $\eta\in{}^{<\alpha}2$ so that conditions 1 to 4 hold. We first construct elements $p'_\xi\in{}^{<\kappa}\kappa$ and $B'_\xi\in K$, for all $\xi\in{}^\alpha 2$ which satisfy the first three required conditions, i.e., for all $\xi,\eta\in{}^\alpha 2$ with $\xi\neq\eta$, 
\begin{multline*}
   p'_\xi\not\subseteq p'_\eta 
   \textrm{ and } B'_\xi\subseteq N_{p'_\xi},
\\
\textrm{and for all $\beta<\alpha$ we have
  $p'_\xi\supseteq p_{\xi_{|\beta}}$ and $B'_\xi\subseteq B_{\xi_{|\beta}}$.}
\end{multline*}
If $\alpha=\beta+1$, then, for each $\eta\in{}^\beta 2$, apply Lemma~\ref{disjoint} for $p_\eta$ and $B_\eta$ to obtain $p'_{\eta\conc 0}$ and $p'_{\eta\conc 1}$. By the density of $K$ in $\mathcal I^+$, we can find 
$B'_{\eta\conc 0},B'_{\eta\conc 1}\in K$ such that $B'_{\eta\conc i}\subseteq N_{p'_{\eta\conc i}}\cap B_\eta$ for $i=0,1$.  
If $\alpha$ is a limit ordinal, then for all $\xi\in{}^\alpha 2$, let 
$p'_\xi=\bigcup_{\beta<\alpha} p_{\xi\restriction\beta}$, and, using the closure property of $K$, choose sets
$B'_\xi\in K$ such that $B'_\xi\subseteq \bigcap_{\beta<\alpha} B_{\xi\restriction\beta}$. 
We have $B'_\xi\subseteq N_{p'_\xi}$ because $B_{\xi\restriction\beta}\subseteq P_{\xi\restriction\beta}$ for all $\beta<\alpha$ and by the definition of $p'_\xi$. 

After $p'_\xi$ and $B'_\xi$ have been constructed for all $\xi\in{}^\alpha 2$, we apply Lemma~\ref{szetval} for the closed subset $\bigcup_{\gamma<\alpha} R_\gamma$ of ${}^n({}^\kappa\kappa)$ and for $p'_{x^0_\alpha},\dots,p'_{x^{n-1}_\alpha}$ and $B'_{x^0_\alpha},\dots,B'_{x^{n-1}_\alpha}$ in order to obtain the elements $p_{x^0_\alpha},\dots,p_{x^{n-1}_\alpha}$ and $B_{x^0_\alpha},\dots,B_{x^{n-1}_\alpha}$. 
In the case that $\xi\notin\{x^0_\alpha,\dots,x^{n-1}_\alpha\}$, we leave everything unchanged, i.e., we let $p_\xi=p'_\xi$ and $B_\xi=B'_\xi$. This choice of the $p_{\xi}$'s and $B_{\xi}$'s will guarantee that all four required properties are satisfied.

\vspace{5 pt}
Now, assume that $\kappa$ is inaccessible. We modify the previous argument in every step of the construction, by requiring, instead of condition~4, that the following property is satisfied for all $\alpha<\kappa$:
\begin{enumerate}
  \item[$4'.$] for all pairwise distinct $\xi_0,\dots,\xi_{n-1}\in{}^{\alpha} 2$  and $\gamma<\alpha$, we have 
    \[(P_{\xi_0}\times\ldots\times P_{\xi_{n-1}})\cap R_{\gamma}=\emptyset.\]
\end{enumerate}
This property will ensure that the perfect set $\textit{Im}(t)$ is an independent set of $R_\gamma$ for each $\gamma<\kappa$ and therefore of $R$. 
Condition $4'$ can be guaranteed in the $\alpha^{\textrm{th}}$ step of the construction by applying Lemma~\ref{szetval2} for the closed subset $\bigcup_{\gamma<\alpha} R_\gamma$ of ${}^n({}^\kappa\kappa)$ and for the collections $(p'_\xi:\xi\in{}^\alpha 2)$ and $(B'_\xi:\xi\in{}^\alpha 2)$ of the previous argument; note that $|{}^\alpha 2|<\kappa$ by the inaccessibility of $\kappa$. 
This completes the proof of the theorem in the case that $R$ is a $\Fsigma$ relation on the whole space ${}^\kappa\kappa$.

\vspace{5 pt}

Now, assume that $R$ is an $n$-ary $\Fsigma$ relation on a closed subset $X$ of the $\kappa$-Baire space ${}^\kappa\kappa$. Then $R'=R\cup({}^n({}^\kappa\kappa)-{}^n X)$ is an $n$-ary $\Fsigma$ relation on~${}^\kappa\kappa$
(note that the assumption $\kappa^{<\kappa}=\kappa$ implies that all open subsets of~${}^\kappa\kappa$ are also $\Fsigma$).
Furthermore, the $R'$-independent sets of size at least $n$ are subsets of $X$ and are therefore also $R$-independent. Hence, this case follows by applying the previous case for $R'$. 

Finally, suppose that $X$ is an arbitrary analytic subset of ${}^\kappa\kappa$ and $R$ is an $n$-ary $\Fsigma$ relation on $X$.
We can assume that $\Delta_n=\{\bar x\in{}^n X: x_i=x_j \textrm{ for some }i<j<n\}$ is a subset of $R$ (we can take $R\cup \Delta_n$ instead of $R$ if necessary, because $\Delta_n$ is a closed relation on $X$ and because, by definition, a set $Y\subseteq X$ is $(R\cup \Delta_n)$-independent iff it is $R$-independent).

Because $X$ is analytic, there exists a closed subset $X''$ of ${}^\kappa\kappa$ and a continuous function $f:{}^\kappa\kappa\longrightarrow {}^\kappa\kappa$ such that $X=f[X'']$. Denoting by $f(\bar x)$ the vector $(f(x_0),\dots, f(x_{n-1}))$ (for $\bar x\in {}^n X$), we let
  \[R''=\{
    \bar x\in {}^n X'': f(\bar x)\in R\}.
  \]
  That is, $R''$ is the inverse image of $R$ under the continuous function 
  ${{}^n X''\longrightarrow {}^n X;}$
  ${\bar x\mapsto f(\bar x)},
  $
  and is therefore a $\Fsigma$ $n$-ary relation on $X''$.

 If $B$ is an $R$-independent set of size $\kappa^+$, then any   
  $B''\subseteq X''$ such that $f[B'']=B$ and $f\restr B''$ is injective 
 is an $R''$-independent set (by the definition of $R''$and the injectivity of $f\restr B''$), and $B''$  has cardinality $\kappa^+$. Therefore, by the previous case, 
  $R''$ has a perfect independent set, i.e., there is a continuous injection $t$ of ${}^\kappa 2$ into $X''$ whose image is $R''$-independent.
 Then, by the definition of $R''$ and because $\Delta_n\subseteq R$, the function $f\comp t$ is a continuous injection  of ${}^\kappa 2$ into $X$ whose image is $R$-independent, and thus, $X$ has a perfect $R$-independent subset.
\end{proof}

\begin{remark}\label{closed rels}
  When $R$ is a \emph{closed binary} relation on an analytic subset $X$ of the $\kappa$-Baire space, the dichotomy in Theorem~\ref{fkappa orders} already follows from the hypothesis $\Iminus\kappa$, 
  even when neither $\Diamond_\kappa$ nor the inaccessibility of $\kappa$ is assumed.
  This can be seen by a slight modification of the proof of Theorem~\ref{fkappa orders}, which we give below.

  We first describe the proof for a closed subset $X$ of the $\kappa$-Baire space; the more general case of analytic subsets follows just as in the proof of Theorem~\ref{fkappa orders}. 
  (However, the argument for obtaining the statement for closed sets $X$ from the $X={}^\kappa\kappa$ case no longer works, since $({}^\kappa\kappa\times{}^\kappa\kappa)- (X\times X)$ is not necessarily closed when $X$ is closed.)

  Suppose that $R$ is a closed subset of $X\times X$ and $B\subseteq X$ is an $R$-independent set of size $\kappa^+$, and let $\mathcal I$ and $K$ be as in the proof of Theorem~\ref{fkappa orders}. As before, we define, for all $\xi\in {}^{<\kappa}2$, functions $p_\xi\in {}^{<\kappa}\kappa$ and sets $B_\xi\in K$ satisfying certain conditions. 
  We require  items~1 to~3 of the proof of Theorem~\ref{fkappa orders}; these conditions can be guaranteed exactly as in that proof. 
  For $R$-independence, 
  it is enough this time to ensure, using Lemma~\ref{szetval} (for $n=2$), that 
  \begin{enumerate}
    \item[$4''.$] $(P_{\xi\conc0}\times P_{\xi\conc{1}})\cap R=\emptyset$ for all $\xi\in{}^{<\kappa}2$
\end{enumerate}
(where $P_\nu=N_{p_\nu}$). 
That the $\kappa$-branches defined by two distinct $x,y\in{}^\kappa 2$ will not be $R$-related follows from applying item~$4''$ to the node $\xi\in{}^{<\kappa}2$ at which $x$ and $y$ split.
The sufficiency of item~$4''$ is the reason
the extra assumption of either $\Diamond_\kappa$ or the inaccessibility of $\kappa$
is unnecessary.

We have to add a fifth condition to guarantee that the perfect set defined by the $p_\xi$'s is a subset of $X$. 
Since $X$ is closed and $\kappa^{<\kappa}=\kappa$ holds, 
there exist open sets $X_\alpha$ such that $X=\bigcap_{\alpha<\kappa} X_\alpha$. We require that
\begin{enumerate}
  \item[$5.$] $P_\xi\subseteq X_{\alpha}$ for all $\alpha<\kappa$ and $\xi\in{}^\alpha 2$.
\end{enumerate}
Suppose $\xi\in{}^\alpha 2$ and a $p'_\xi$ and $B'_\xi$ has been defined satisfying items~1-3 and~$4''$. Then 
$B'_\xi\subseteq B\subseteq X\subseteq X_\alpha$, and 
so, (using item~1) we have $B'_\xi\subseteq
N_{p'_\xi}\cap X_\alpha$. 
Therefore, using the facts that $B'_\xi\in\mathcal I^+$, that $\mathcal I$ is $\kappa^+$-complete and that $N_{p'_\xi}\cap X_\alpha$ is the union of at most $\kappa$ many basic open sets, we can find an extension  $p_\xi\supseteq p'_\xi$ such that $N_{p_\xi}\subseteq X_\alpha$ and $N_{p_\xi}\cap B'_\xi\in\mathcal I^+$. By the density of $K$ in $\mathcal I^+$, there exists $B_\xi\in K$ which is a subset of $N_{p_\xi}\cap B'_\xi$, and thus, $p_\xi$ and $B_\xi$ satisfy all~5 required conditions.
This completes the proof of the case when $X$ is a closed subset of the $\kappa$-Baire space.

To obtain from this case the dichotomy for closed binary relations $R$ on \emph{arbitrary analytic subsets} $X$ of ${}^\kappa\kappa$, we employ exactly the same argument (for $n=2$) as the one that was used at end of the proof of Theorem~\ref{fkappa orders} to show the corresponding implication for $\Fsigma$ relations. Note that,
using the notation at the end of the proof of Theorem~\ref{fkappa orders}, $R''$ is now a closed binary relation on a closed subset $X''$ of ${}^\kappa\kappa$, because it is the inverse image of the closed subset $R$ of $X\times X$ under a continuous function.
\end{remark}
\begin{remark}\label{PSP} 
  The special case of the dichotomy in Theorem~\ref{fkappa orders} for the empty binary relation $R=\emptyset$ is that the \emph{$\kappa$-perfect set property} holds for closed (and even analytic) subsets of the $\kappa$-Baire space, i.e., any closed (analytic) subset is either of size $\leq\kappa$ or contains a perfect subset. (In fact, by a generalization of \cite[Theorem~1]{VaananenCantorBendixson}, or as a special case of Remark~\ref{closed rels}, the $\kappa$-perfect set property for closed sets follows already from the assumption $\Iminus\kappa$.) 
  
  This implies that our dichotomy does not hold for $\kappa$ if there exists a \emph{weak $\kappa$-Kurepa tree}, i.e., a tree of height $\kappa$ with at least $\kappa^+$ many $\kappa$-branches (of length $\kappa$) whose $\alpha^{\textrm{th}}$ level has size $\leq|\alpha|$ for stationarily many $\alpha\in\kappa$. If $T$ is a weak $\kappa$-Kurepa tree, then the set $[T]$ of $\kappa$-branches of $T$ is a closed subset of the $\kappa$-Baire space of size~$\geq\kappa^+$ that does not contain a perfect subset; see \cite[Section~4]{FriedHyttKul} 
  and \cite{Friedman2014}. 
  The idea of using Kurepa trees to obtain counterexamples to the $\aleph_1$-perfect set property had already appeared in \cite{VaananenCantorBendixson} and \cite{MeklerVaananen}. 
  
  We therefore have the following two facts. Firstly, in $V=L$, our dichotomy does not hold for any uncountable regular $\kappa$, by \cite[Lemma~4]{Friedman2014}. 
  Secondly, our dichotomy for $\kappa$ implies, by a result of Robert Solovay \cite[Sections~3 and~4]{JechTrees}, that $\kappa^+$ is an inaccessible cardinal in $L$. 
  Note that by Theorem~\ref{fkappa orders}, the consistency of our dichotomy follows from the existence of a measurable $\lambda>\kappa$; however, we do not yet know its exact consistency strength.\\[10 pt]
\end{remark}

As mentioned in the Introduction, Theorem~\ref{fkappa orders} can also be seen as the uncountable version of a result \cite[Corollary~2.13]{Kubis} about homogeneous subsets of $G_\delta$ colorings on analytic spaces. 
A \emph{coloring} of a set $X$ is any subset $C$ of ${[X]}^n$ for some $1<n\in\omega$, and a set $B\subseteq X$ is \emph{$C$-homogeneous} if ${[B]}^n\subseteq C$.
Notice that ${[X]}^n$ can be identified with an open subset of ${}^n X$, namely 
$\{
  \bar x\in {}^n X: x_i\neq x_j \textrm{ for all } i<j<n
\}.$
Thus, when $X$ is an analytic subset of the $\kappa$-Baire space, we may speak about $\Gdelta$ colorings on $X$. 
The ``complement'' of such colorings can be considered as $\Fsigma$ relations on $X$, and furthermore, the concept of homogeneity translates to that of independence. Hence, Theorem~\ref{fkappa orders} yields the following.
\begin{corollary}\label{colorings} Assume $\Iminus\kappa$ and either that $\Diamond_\kappa$ or that $\kappa$ is inaccessible.
  \\[5 pt]
  Suppose that $C$ is a $\Gdelta$ coloring on an analytic subset of ${}^\kappa\kappa$. Then either all $C$-homogeneous sets are of size $\leq\kappa$, or there is a perfect $C$-homogeneous set.
\end{corollary}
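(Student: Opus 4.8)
The plan is to derive the corollary from Theorem~\ref{fkappa orders} via the translation indicated in the paragraph preceding the statement: a coloring becomes an $\Fsigma$ relation and homogeneity becomes independence. Fix $1<n<\omega$, an analytic subset $X$ of ${}^\kappa\kappa$, and a $\Gdelta$ coloring $C\subseteq[X]^n$. Using the identification of $[X]^n$ with the open set
\[
U=\{\bar x\in{}^n X: x_i\neq x_j\text{ for all }i<j<n\},
\]
we regard $C$ as a (permutation-invariant) subset of $U$, which by hypothesis is $\Gdelta$ in ${}^n X$. Recall that $\Iminus\kappa$ implies $\kappa^{<\kappa}=\kappa$, so that every open subset of ${}^n({}^\kappa\kappa)$ is $\Fsigma$ and the class of $\Fsigma$ sets is closed under intersection; I shall use these facts freely.

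First I would set $R=U\setminus C$. Since $C$ is $\Gdelta$, its complement $({}^n X)\setminus C$ is $\Fsigma$, and since $U$ is open it is $\Fsigma$ as well; hence $R=U\cap\big(({}^n X)\setminus C\big)$ is an $\Fsigma$ relation on $X$. The key observation, which I would check next, is that for pairwise distinct $y_0,\dots,y_{n-1}\in X$ the tuple $(y_0,\dots,y_{n-1})$ automatically belongs to $U$, so that $(y_0,\dots,y_{n-1})\in R$ holds if and only if $\{y_0,\dots,y_{n-1}\}\notin C$. Consequently a set $B\subseteq X$ is $R$-independent exactly when $[B]^n\subseteq C$, that is, exactly when $B$ is $C$-homogeneous.

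With this in place the corollary is immediate: applying Theorem~\ref{fkappa orders} to the $\Fsigma$ relation $R$ on the analytic set $X$ gives that either all $R$-independent sets have size $\leq\kappa$, or there is a perfect $R$-independent set; translating back through the equivalence between $R$-independence and $C$-homogeneity yields precisely the stated dichotomy for $C$.

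There is no genuinely hard step here. The only point requiring a little care is confirming that $R$ is indeed $\Fsigma$, which is where $\kappa^{<\kappa}=\kappa$ (a consequence of $\Iminus\kappa$) enters---both to see that the open set $U$ is $\Fsigma$ and to see that $\Fsigma$ is preserved under the intersection defining $R$. The extra hypotheses $\Diamond_\kappa$ or inaccessibility of $\kappa$ play no role beyond their use inside Theorem~\ref{fkappa orders}.
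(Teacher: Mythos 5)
Your proposal is correct and follows exactly the route the paper intends: pass to the complement $R=U\setminus C$ of the coloring inside the open set $U$ of injective tuples, verify that $R$ is $\Fsigma$ (using $\kappa^{<\kappa}=\kappa$, a consequence of $\Iminus\kappa$) and that $R$-independence coincides with $C$-homogeneity, and then invoke Theorem~\ref{fkappa orders}. The paper leaves this translation implicit in the paragraph preceding the corollary, and your write-up simply supplies the routine details.
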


In the remainder of the section, we state some corollaries of the $n=2$ case of Theorem~\ref{fkappa orders}.
\begin{corollary}\label{Kkappa actions}
Assume $\Iminus\kappa$ and either that $\Diamond_\kappa$ or that $\kappa$ is inaccessible.
  Then the $\kappa$-Silver dichotomy holds for $\Fsigma$ equivalence relations on analytic subsets of ${}^\kappa\kappa$. 
\end{corollary}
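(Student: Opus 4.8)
The plan is to derive the $\kappa$-Silver dichotomy for $\Fsigma$ equivalence relations directly from the $n=2$ case of Theorem~\ref{fkappa orders}. Recall that the $\kappa$-Silver dichotomy for an equivalence relation $E$ on an analytic set $X\subseteq{}^\kappa\kappa$ asserts that either $E$ has at most $\kappa$ many equivalence classes, or there is a perfect set of pairwise $E$-inequivalent elements. So let $E$ be a $\Fsigma$ equivalence relation on an analytic subset $X$ of the $\kappa$-Baire space, and assume $E$ has more than $\kappa$ many equivalence classes; I must produce a perfect set of pairwise $E$-inequivalent elements.

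First I would pass to the complement: let $R={}^2X-E$, which is the complement of a $\Fsigma$ set. This is \emph{not} obviously $\Fsigma$, so the direct route does not quite work; instead the right move is to apply Theorem~\ref{fkappa orders} to $E$ \emph{itself} after first recording the key reduction. The point is that a subset $Y\subseteq X$ is a set of pairwise $E$-inequivalent elements precisely when it is a transversal-like selector: for all distinct $y_0,y_1\in Y$ we have $(y_0,y_1)\notin E$ — that is, $Y$ is exactly an $E$-independent set in the sense of the paper (with $n=2$). Since $E$ is reflexive, $\Delta_2\subseteq E$, so $E$-independence automatically forbids repeated coordinates, and $E$-independent sets of size $\geq 2$ are genuine sets of pairwise inequivalent points. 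Thus Theorem~\ref{fkappa orders} applied to the $\Fsigma$ relation $E$ on $X$ gives: either all $E$-independent sets have size $\leq\kappa$, or there is a perfect $E$-independent set, i.e.\ a perfect set of pairwise $E$-inequivalent points — which is exactly the second horn of the Silver dichotomy.

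It remains to handle the first horn: I must show that if every $E$-independent set has size $\leq\kappa$, then $E$ has at most $\kappa$ many equivalence classes. Equivalently, contrapositively, if $E$ has more than $\kappa$ many classes, then there is an $E$-independent set of size $\kappa^+$. This is the one genuinely non-formal step, and it is where the main (mild) obstacle lies: one needs a selector picking one representative from each of $\kappa^+$ many distinct classes, and the resulting set of $\kappa^+$ representatives is by construction pairwise $E$-inequivalent, hence $E$-independent. Choosing such representatives requires only the axiom of choice — pick, for each of $\kappa^+$ many distinct classes, one element — so a set of size $\kappa^+$ of pairwise $E$-inequivalent elements exists whenever there are more than $\kappa$ classes. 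Combining the two directions: the hypothesis ``all $E$-independent sets are of size $\leq\kappa$'' is equivalent to ``$E$ has $\leq\kappa$ many equivalence classes'', and Theorem~\ref{fkappa orders} then yields exactly the $\kappa$-Silver dichotomy. (I would also remark, as the authors do elsewhere, that $\Iminus\kappa$ implies $\kappa^{<\kappa}=\kappa$, so the ambient hypotheses of Theorem~\ref{fkappa orders} are in force throughout.)
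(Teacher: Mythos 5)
Your proof is correct and matches the paper's intended argument: the paper gives no explicit proof, presenting this as an immediate consequence of the $n=2$ case of Theorem~\ref{fkappa orders} applied to the equivalence relation $E$ itself, with $E$-independent sets being exactly sets of pairwise $E$-inequivalent elements and the translation between ``more than $\kappa$ classes'' and ``an independent set of size $\kappa^+$'' handled by choosing representatives. Your brief detour through the complement $R={}^2X-E$ is unnecessary but harmless, since you immediately discard it in favour of the right reduction.
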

We use the following notation in the next corollary. For $H$ a topological space, $X$ any set and 
$S\subseteq H\times X\times X$,  
  let $R_S$ be the projection of $S$ onto $X\times X$, i.e., 
  \[R_S=\{(x,y): (h,x,y)\in S\textrm{ for some }h\in H\}.\]
  Specifically, for the action $a$ of a group $H$ on $X$, $R_a$ is the orbit equivalence relation.
  
  Recall from the introduction that a topological space is $\kappa$-compact
  if any of its open covers has a subcover of size $<\kappa$, and is $K_\kappa$, if it can be written as the union of at most $\kappa$ many $\kappa$-compact subsets.
A topological group is $K_\kappa$ if it is $K_\kappa$ as a topological space. 
The first item of the next corollary is stated 
  because 
  we will need to use this more general form later.
  
\begin{corollary}\label{fkappa orders2}
Assume $\Iminus\kappa$ and either that $\Diamond_\kappa$ or that $\kappa$ is inaccessible.
  Let $X$ be an analytic subset of the $\kappa$-Baire space (equipped with the subspace topology) and let $H$ be an arbitrary $K_\kappa$ topological space. 
  \begin{enumerate}[1.] 
\item If 
$S\subseteq H\times X\times X$ is closed,
then either all $R_S$-independent sets have size $\leq\kappa$ or there is a perfect $R_S$-independent set. 
\item If $H$ is a group that acts continuously on $X$, then there are either $\leq\kappa$ many or perfectly many orbits.
\end{enumerate}
\end{corollary}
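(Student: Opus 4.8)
The plan is to derive item~1 from the $n=2$ case of Theorem~\ref{fkappa orders} by showing that $R_S$ is a $\Fsigma$ binary relation on $X$, and then to obtain item~2 as an immediate application of item~1 to the orbit equivalence relation.

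First I would isolate a ``$\kappa$-tube lemma'': if $H'$ is a $\kappa$-compact topological space, $Y$ is a topological space in which the intersection of fewer than $\kappa$ many open sets is open, and $C\subseteq H'\times Y$ is closed, then the projection $\{y\in Y:(h,y)\in C\text{ for some }h\in H'\}$ is closed in $Y$. Indeed, given a point $y_0$ not in this projection, for each $h\in H'$ choose open $W_h\ni h$ and $V_h\ni y_0$ with $(W_h\times V_h)\cap C=\emptyset$; since the $W_h$ cover $H'$, fewer than $\kappa$ many of them, say $W_{h_i}$ for $i<\mu$ with $\mu<\kappa$, already cover $H'$, and then $V=\bigcap_{i<\mu}V_{h_i}$ is an open neighbourhood of $y_0$ disjoint from the projection. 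The instance I need is $Y={}^2X$ with the subspace topology inherited from ${}^2({}^\kappa\kappa)$; this $Y$ has the required property because $\Iminus\kappa$ implies $\kappa^{<\kappa}=\kappa$, under which ${}^\kappa\kappa$ (hence ${}^2({}^\kappa\kappa)$, hence every subspace of it) is closed under intersections of fewer than $\kappa$ open sets.

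For item~1, write $H=\bigcup_{\alpha<\kappa}H_\alpha$ with each $H_\alpha$ $\kappa$-compact and put $S_\alpha=S\cap(H_\alpha\times X\times X)$, which is closed in $H_\alpha\times{}^2X$. Applying the $\kappa$-tube lemma with $H'=H_\alpha$, $Y={}^2X$ and $C=S_\alpha$ shows that $R_\alpha:=\{(x,y)\in{}^2X:(h,x,y)\in S\text{ for some }h\in H_\alpha\}$ is closed. Since $R_S=\bigcup_{\alpha<\kappa}R_\alpha$, the relation $R_S$ is $\Fsigma$ on $X$, so the $n=2$ case of Theorem~\ref{fkappa orders} yields the dichotomy for $R_S$-independent sets.

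For item~2, let $a\colon H\times X\to X$ be the action and set $S=\{(h,x,y):a(h,x)=y\}$, which is the preimage of the diagonal of ${}^2X$ (closed, since ${}^\kappa\kappa$ is Hausdorff) under the continuous map $(h,x,y)\mapsto(a(h,x),y)$, hence closed. Then $R_S$ is exactly the orbit equivalence relation, so item~1 applies: if all $R_S$-independent sets have size $\leq\kappa$, then a maximal one (obtained by Zorn's lemma, using that the diagonal lies in $R_S$) is a transversal meeting each orbit exactly once, so there are $\leq\kappa$ orbits; if there is a perfect $R_S$-independent set, its points are pairwise inequivalent, witnessing perfectly many orbits. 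The only point requiring care is the $\kappa$-tube lemma, and in particular the verification that the ambient product space is closed under intersections of fewer than $\kappa$ open sets --- which is precisely where the hypothesis $\Iminus\kappa$ (via $\kappa^{<\kappa}=\kappa$) enters; the rest is routine bookkeeping together with the appeal to Theorem~\ref{fkappa orders}.
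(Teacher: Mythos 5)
Your proposal is correct and follows essentially the same route as the paper: the core step in both is the $\kappa$-compact tube-lemma argument showing that the projection of a closed $S$ is closed (hence $\Fsigma$ after decomposing $H$ into $\kappa$ many $\kappa$-compact pieces), followed by an appeal to the $n=2$ case of Theorem~\ref{fkappa orders}. You merely make explicit two points the paper leaves implicit --- that forming $\bigcap_{h\in I}V_h$ requires closure under intersections of $<\kappa$ open sets (available via $\kappa^{<\kappa}=\kappa$ from $\Iminus\kappa$), and the derivation of item~2 from item~1 via the closed graph of the action and a maximal independent set as an orbit transversal.
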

\begin{proof} A generalization of a standard argument from the countable case \cite[Exercise~3.4.2]{SuGao} shows that if $H$ is a $\kappa$-compact topological space, then $R_S$ is a closed subset of $X\times X$. In more detail, let $(x,y)\in X\times X-R_S$ be arbitrary. Because $S$ is closed, we can choose for all $h\in H$ open sets $U_h\subseteq H$ and $V_h\subseteq X\times X$ such that 
\[(h,x,y)\in U_h\times V_h\subseteq H\times X\times X - S.\] 
By the $\kappa$-compactness of $H$, there exists a set $I\in{[H]}^{<\kappa}$ such that $H=\bigcup_{h\in I}U_h$. Then $V=\bigcap_{h\in I} V_h$ is an open subset of $X\times X$ such that $(x,y)\in V\subseteq X\times X - R_S$. 

Thus $R_S$ is indeed closed in the case $H$ is $\kappa$-compact,  
implying that if $H$ is $K_\kappa$, then $R_S$ is a $\Fsigma$ binary relation on $X$. An application of Theorem~\ref{fkappa orders} completes the proof of \textrm{1}, of which the second item is a special case. 
\end{proof}

\begin{remark}\label{weakly compact}
In the rest of the paper, we will be 
considering
the special cases of
$K_\kappa$ topological spaces obtained by 
endowing $K_\kappa$ subsets of the $\kappa$-Baire space with the subspace topology (induced by the $\kappa$-Baire topology). 
We also consider
$K_\kappa$ subsets of the product space $({}^\kappa\kappa,\producttop)$ 
equipped
with the subspace topology 
induced by $\producttop$.
(Recall from the introduction that $\producttop$ is the product topology on the set ${}^\kappa\kappa$ obtained by endowing $\kappa$ with the discrete topology.)
We therefore give characterizations of, or some examples for, such sets below.

Notice that $K_\kappa$ subsets of the $\kappa$-Baire space are always $K_\kappa$ subsets of $({}^\kappa\kappa,\producttop)$
as well, due to the fact that the $\kappa$-Baire topology on the set ${}^\kappa\kappa$ is finer than the product topology $\producttop$.

When $\kappa$ is a weakly compact cardinal, the converse also holds; in this case, the following are equivalent for any $H\subseteq{}^\kappa\kappa$. 
\begin{enumerate}[1.]
  \item $H$ is a $K_\kappa$ subset of the $\kappa$-Baire space. 
  \item $H$ is a $K_\kappa$ subset of the product space $({}^\kappa\kappa,\producttop)$. 
  \item $H$ is a $\Fsigma$ subset of the $\kappa$-Baire space which is \emph{eventually bounded} 
    (this means, by definition, that
    there exists an $x\in{}^\kappa\kappa$ such that for all $h\in H$, we have $h\leq^\ast x$, 
    i.e., 
    there exists an $\alpha<\kappa$ such that $h(\beta)\leq x(\beta)$ for all $\alpha\leq\beta<\kappa$). 
\end{enumerate}
    The equivalence of the first and third items follows from \cite[Lemma~2.6]{LuckeMottoRosSchlicht}, and, as just noted, the second item is implied by the first one. 
  
    The third item can be obtained from the second using the fact that a $\kappa$-compact subset $C$ of $({}^\kappa\kappa,\producttop)$ is 
  closed in the $\kappa$-Baire topology 
  and is bounded 
  (i.e., there exists an $x\in{}^\kappa\kappa$ such that $c(\beta)\leq x(\beta)$ for all $c\in C$ and $\beta<\kappa$). 
   The above fact can be proven by modifying standard arguments from the countable case (see \cite[Exercise~4.11]{KechrisClassicalDST}). 
   In more detail, suppose that $C\subseteq{}^\kappa\kappa$ is $\kappa$-compact in the product topology~$\producttop$. 
   If we take any $\alpha<\kappa$
   (and we denote by $N_{\{(\alpha,\gamma)\}}$ the set $\{y\in{}^\kappa\kappa:y(\alpha)=\gamma\}\in\producttop$ for all $\gamma<\kappa$), the family
   \[\{N_{\{(\alpha,\gamma)\}}:\gamma<\kappa
     \text{ and }
   C\cap N_{\{(\alpha,\gamma)\}}\neq\emptyset\}\]
   is a disjoint $\producttop$-open cover of $C$, and must therefore be of size $<\kappa$.
   Thus, we can define $x\in{}^\kappa\kappa$ by letting 
   $x(\alpha)=\sup\{\gamma<\kappa:C\cap N_{\{(\alpha,\gamma)\}}\neq\emptyset\}$ for all $\alpha<\kappa$, 
 and $x$ witnesses that $C$ is bounded.

   To see that $C$ is closed in the $\kappa$-Baire topology,
   let $z\in{}^\kappa\kappa-C$. We can choose, for all $y\in C$, disjoint neighborhoods $U_y, V_y\in\producttop$ of $z$ and $y$ respectively. 
 By the $\kappa$-compactness of $C$, the $\producttop$-open cover $\{V_y:y\in C\}$ of $C$ 
   can be refined to a subcover $\{V_y:y\in I\}$ of size $<\kappa$. Then the intersection $U=\bigcap_{y\in I}U_y$ is disjoint from $C$, contains $z$, and is open in the $\kappa$-Baire topology (because the $\kappa$-Baire topology is finer than $\producttop$ and 
is closed under intersections of size $<\kappa$).
   \\[5 pt]
\indent
Now, consider the case when $\kappa^{<\kappa}=\kappa$ is not weakly compact. If $\Iminus\kappa$ holds,  
the $K_\kappa$ subsets of the \emph{$\kappa$-Baire space}
  are exactly those of size at most $\kappa$. This is because by \cite[Corollary~2.8]{LuckeMottoRosSchlicht}, the above equivalence holds when $\kappa^{<\kappa}=\kappa$ is not weakly compact and the closed subsets of the $\kappa$-Baire space have the $\kappa$-perfect set property (this latter requirement 
  is implied by $\Iminus\kappa$;
  see Remark~\ref{PSP}). 
  Examples of subsets $H$ of the set ${}^\kappa\kappa$ which are $\kappa$-compact 
  subsets of the \emph{product space $({}^\kappa\kappa,\producttop)$}
  but are not $K_\kappa$ 
  subsets of the $\kappa$-Baire space, even when $\Iminus\kappa$ is not assumed,
  include by \cite[Lemma~2.2]{LuckeMottoRosSchlicht} (and Tychonoff's theorem) the set $H={}^\kappa 2$ and, 
  more generally, sets of the form $H=\prod_{\alpha<\kappa}I_\alpha$, where $I_\alpha\in{[\kappa]}^{<\kappa}$ and $I_\alpha$ is finite except for~$<\kappa$ many $\alpha<\kappa$. 
  (See \cite{Lipparini2013} and the references therein for when this last assumption can be weakened.)
\end{remark} 
\begin{remark}\label{weakly compact2}
  In light of the previous remark, especially the fact that $K_\kappa$ subsets of the $\kappa$-Baire space are of size $\leq\kappa$ when $\kappa$ is not weakly compact and $\Iminus\kappa$ holds, we mention the following observation.
  
  A model of ZFC in which $\kappa$ is weakly compact (in fact, supercompact) and $\Iminus\kappa$ holds can be obtained, starting out from a situation in which $\kappa$ is supercompact and there exists a measurable $\lambda>\kappa$, in the following way.
Before L\'evy-collapsing $\lambda$ to $\kappa^+$,
one first applies the Laver preparation~\cite{Laver1978} to make the supercompactness of $\kappa$ indestructible by any $<\kappa$-directed closed forcing. 
If we would also like to have $2^\kappa>\kappa^+$ together with $\Iminus\kappa$ for a supercompact $\kappa$, we force after the Laver-preparation with the product of the L\'evy-collapse and $\textit{Add}(\kappa, \mu)$ for some $\mu>\lambda$.
We would like to thank Menachem Magidor for suggesting the arguments found in this remark. 
\end{remark}

\section{
  Elementary embeddability on models of size $\kappa$
}\label{models section}
In this section, we use our previous results to obtain dichotomy properties for the set of models with domain $\kappa$ of some $\anform{\LL{\kappa^+}\kappa}$-sentence, when considered up to isomorphism, embeddability or elementary embeddability by elements of some $K_\kappa$ subset $H$ of the $\kappa$-Baire space. 
More generally, we will be interested in these models 
up to maps (in $H$) that preserve
certain subsets of $\LL{\kappa^+}\kappa$-formulas, which we will call \emph{fragments} (see Definition~\ref{fragment def} below). 
In
the case of some fragments, it will be enough to assume that $H$ is a $K_\kappa$ subset of the product space 
$({}^\kappa\kappa,\producttop)$; 
(recall from the introduction that $\producttop$ is the product topology on the set ${}^\kappa\kappa$, where $\kappa$ is given the discrete topology).
\\[5 pt]
{\bf Notation.}
We will use the following notation in this section. As usual, $\sym\kappa$ denotes the permutation group of $\kappa$, and we write $\inj\kappa$ for the monoid of all injective functions of ${}^\kappa\kappa$.

The symbol $L$ denotes a fixed first order language which contains only relation symbols and is of size at most $\kappa$.
However, the arguments below also work in the case of languages which have 
infinitary relations of arity $<\kappa$.
We assume the language $L$ has $\kappa$ many variables, the sequence of which is denoted by $(v_i: i\in\kappa)$.
The symbols $\struktura A$, $\struktura B$, etc. are used to denote $L$-structures whose domains are $A$, $B$, etc. The set of all $L$-structures with domain $\kappa$ is denoted by \Mod \kappa L. 
Given a structure $\struktura A\in \Mod \kappa L$, we identify $\struktura A$-valuations with elements of ${}^\kappa\kappa$.

As usual, $\LL\lambda\mu$ 
(where $\omega\leq\mu\leq\lambda\leq\kappa^+$ and $\mu\leq\kappa$) 
denotes the infinitary language which allows conjunctions and disjunctions of $<\lambda$ many formulas and quantification over $<\mu$ many variables.
See, e.g.,
Definitions 1.1.2 and 1.1.3 of \cite{DickmannInfinitaryLanguages} 
for the precise definition of {$\LL{\lambda}\mu$-formulas} 
or, alternatively, Definitions 9.12 and 9.13 of \cite{VaananenModelsGames}.
In particular, note that 
by definition, an $\LL{\lambda}\mu$-formula contains $<\mu$ many free variables,
from $\{v_i:i\in\kappa\}$.
The concept of the \emph{subformulas} of a formula $\varphi\in\LL{\kappa^+}\kappa$ is defined by induction on the complexity of $\varphi$ as usual 
(see, e.g., Definition 1.3.1 of \cite{DickmannInfinitaryLanguages} or \cite[p.~234]{VaananenModelsGames}.
Note that if $\varphi$ is obtained as $\varphi=\bigwedge \Phi$, then any $\phi\in\Phi$ is defined to be a subformula of $\varphi$, but $\bigwedge\Phi'$, where $\Phi'\subset\Phi$, is not a subformula). 

For $\varphi\in \LL{\kappa^+}\kappa$ and $h\in{}^\kappa\kappa$, we denote by $s_h \varphi$ the formula obtained from $\varphi$ by simultaneously substituting, for all $i\in\kappa$, the variable $v_{h(i)}$ for the variable $v_i$. 
We say that a set $F$ of $\LL{\kappa^+}\kappa$-formulas is \emph{closed under substitution} if for any $\varphi\in F$ and $h\in {}^\kappa\kappa$ we have $s_h \varphi\in F$. 
Note that since 
${\kappa}^{<\kappa}=\kappa$, closing a nonempty set of formulas of $\LL{\kappa^+}\kappa$ of size $\leq\kappa$ under substitution leads to a set of formulas of size $\kappa$.

\begin{definition}
\label{fragment def}
We define a subset $F$ of the infinitary language $\LL{\kappa^+}\kappa$ 
to be a \emph{fragment of $\LL{\kappa^+}\kappa$} 
if $|F|=\kappa$,
\begin{enumerate}[1.]
  \item $F$ contains all atomic formulas,
  \item $F$ is closed under negation and taking subformulas, and
  \item $F$ is closed under substitution of variables.
\end{enumerate}
\end{definition}
\begin{definition}
\label{elem emb def}
Suppose $F$ is a fragment of $\LL{\kappa^+}\kappa$, 
$H\subseteq\inj\kappa$,
and 
$\struktura A,\struktura B\in\Mod\kappa L$.
We say that $\struktura A$ is \emph{$(F,H)$-elementarily embeddable} into \struktura B 
iff there is a map $h\in H$ such that 
\begin{equation}\label{ee def}
\struktura A\models\varphi[a] \textrm{ iff } 
\struktura B\models\varphi[h\comp a]
\textrm{ for all $\varphi\in F$ and valutations $a\in{}^\kappa\kappa$.}
\end{equation}
\end{definition}
Note that
any function 
$h\in{}^\kappa\kappa$ satisfying (\ref{ee def}), where $F$ is any fragment, 
must be 
an embedding of $\struktura A$ into $\struktura B$ (by item~1 
of Definition~\ref{fragment def}), and in particular, 
we must have $h\in\inj\kappa$. 
This is the reason that we have chosen to define the above concept only for subsets $H$ of $\inj\kappa$ (instead of arbitrary subsets of ${}^\kappa\kappa$).
We remark that if $H$ is a \emph{submonoid} of $\inj\kappa$, then the binary relation of $(F,H)$-elementary embeddability on $\Mod\kappa L$
is a partial order (but there is no reason for this to hold when $H$ is an arbitrary subset of $\inj\kappa$).
\begin{example}
Many interesting notions are special cases 
of $(F,H)$-elementary embeddability, including 
\begin{itemize*}
  \item
$H$-embeddability (when $F$ is the set $\At$ of atomic formulas and their negations), 
\item
$H$-elementary embeddability (when $F=\LL\omega\omega$), 
and
\item $H$-isomorphism, when $H$ is a subgroup of $\sym\kappa$ (and $F$ is any fragment. Two models in $\Mod\kappa L$ are defined to be \emph{$H$-isomorphic} iff there exists a $h\in H$ which is an isomorphism between them.) Note that 
  $H$-isomorphism is an equivalence relation on $\Mod\kappa L$ when $H$ is a subgroup of $\sym\kappa$.
\end{itemize*}
  Other maps that may be of interest are 
  \begin{itemize*}
    \item
  those obtained when $F=\LL\lambda\mu$ (where $\omega\leq\mu\leq\lambda\leq\kappa$), 
or 
\item
those preserving the $n$-variable fragment $L^n_{\lambda\omega}$ of this logic.
(By definition, the $n$ variable fragment of $\LL\lambda\mu$, or equivalently of $\LL\lambda\omega$, consists of those formulas which use only the variables $v_0,\dots,v_{n-1}$. In this case, the corresponding fragment $F$ is the set of those $\LL\lambda\omega$-formulas which contain at most $n$ (arbitrary) variables from $\{v_i:i\in\kappa\}$. It is this fragment $F$ that we will denote by $L^n_{\lambda\omega}$.) 
\end{itemize*}
\end{example}

A fragment $F\subseteq\LL{\kappa^+}\kappa$ induces a topology $t_F$ on the set \Mod\kappa L in a natural way.  
To a formula $\varphi$ and a valuation $a\in{}^{\kappa}\kappa$, 
we correlate the basic clopen set
$\mod\kappa\varphi a=\{\struktura A\in\Mod\kappa L: \struktura A\models\varphi[a]\}.
$
The topology $t_F$ on \Mod\kappa L is obtained by taking arbitrary unions of intersections of $<\kappa$ many sets from the collection 
\[b_F=\{\mod\kappa\varphi a :\varphi\in F, a\in{}^{\kappa}\kappa\},\]
and we denote by $\Modf F$  
the topological space $(\Mod\kappa L,t_F)$.
The canonical topological space 
used to study the connections between model theory and the generalized Baire space
is $\Modf\At$, and it
is homeomorphic to the Cantor space ${}^\kappa 2$; see \cite{MeklerVaananen,VaananenGamesTrees,FriedHyttKul}. 
An advantage of working with $t_{F}$ instead of $t_{\At}$ is that $(F,H)$-elementary embeddability induces a ``$t_{F}$-continuous action'' of $H$ on $\Mod\kappa\psi$ 
(in the more general sense of item~1 of Corollary~\ref{fkappa orders2}; see the proof of Theorem~\ref{elem emb} below). 
The above fact is needed in order for us to be able to use the results of the previous section. 

Firstly, we show that for an arbitrary fragment $F$, the space $\Modf F$ is homeomorphic to a $\Gdelta$ subset $X_F$ of ${}^\kappa 2$. Our proof is basically a generalization from the countable case of a proof in \cite{MorleyVC}. 
To start, note that a bijection between $\kappa$ and $F$ allows us to define the generalized Cantor topology on ${}^F 2$. In fact, since $\kappa$ is regular (by $\kappa^{<\kappa}=\kappa$), 
another basis for this topology is 
$
\{N_p: p\in{}^\Phi 2 \textrm{ for some } \Phi\in{[F]}^{<\kappa}\},
$
where $N_p=\{x\in {}^F 2 : p\subseteq x\}$,
and therefore
this topology does not depend on the chosen bijection.

For a fragment $F$ of $\LL{\kappa^+}\kappa$, define an injection $i_F:\Mod\kappa L\longrightarrow {}^F 2$ as follows: if $\struktura A\in\Mod\kappa L$, then let $i_F(\struktura A)\in {}^F 2$ be such that 
\[i_F(\struktura A)(\varphi)=1 \textrm{ iff } \struktura A\models\varphi[\id\kappa]
\]
for all $\varphi\in F$.
The function $i_F$ is in fact injective, because if $\struktura A, \struktura B$ are different structures in $\Mod\kappa L$, then there exists a formula $\varphi$ in $\At$ (the set of all atomic formulas and their negations) such that 
$\struktura A\models\varphi[\id\kappa]$ and $\struktura B\not\models\varphi[\id\kappa]$, 
  i.e., 
 \[\textrm{ 
$
i_F(\struktura A)(\varphi)=1$, and $i_F(\struktura B)(\varphi)=0$. 
}\]
This implies, using the fact that $\At\subseteq F$ by Definition~\ref{fragment def}, that $i_F(\struktura A)\neq i_F(\struktura B)$.

We denote by $X_F$ the image of the injection $i_F$.
\begin{proposition}\label{models1}
  If $F$ is a fragment of $\LL{\kappa^+}\kappa$, then $X_F$ is a $\Gdelta$ subset of the Cantor space ${}^F 2$, and $i_F$ is a homeomorphism from $\Modf F$ onto its image $X_F$, where $X_F$ is equipped with the subspace topology.
\end{proposition}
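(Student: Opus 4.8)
The statement has two parts --- that $i_F$ is a homeomorphism of $\Modf F$ onto $X_F$, and that $X_F$ is $\Gdelta$ in the Cantor space ${}^F 2$ --- and I would treat them separately. For the homeomorphism, injectivity of $i_F$ is already established, so it remains to show $i_F$ is continuous and open onto its image. For continuity, a subbasic open set of ${}^F 2$ is $\{x : x(\varphi) = \epsilon\}$ with $\varphi \in F$ and $\epsilon \in 2$; its $i_F$-preimage is $\mod\kappa\varphi{\id\kappa}$ if $\epsilon = 1$ and, since $F$ is closed under negation, $\mod\kappa{\neg\varphi}{\id\kappa}$ if $\epsilon = 0$, in either case a basic open subset of $\Modf F$. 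For openness, the key point is that every member of the subbasis $b_F$ of $t_F$ is already of the form $\mod\kappa\psi{\id\kappa}$ for some $\psi \in F$: given $\varphi \in F$ and a valuation $a \in {}^\kappa\kappa$, take $\psi = s_a\varphi$, which lies in $F$ because $F$ is closed under substitution of variables; by the substitution lemma for $\LL{\kappa^+}\kappa$ one has $\struktura A \models s_a\varphi[\id\kappa]$ iff $\struktura A \models \varphi[a]$, i.e.\ $\mod\kappa\varphi a = \mod\kappa{s_a\varphi}{\id\kappa}$. Since $i_F[\mod\kappa\psi{\id\kappa}] = X_F \cap \{x : x(\psi) = 1\}$ is relatively clopen in $X_F$, and $i_F$ --- being a bijection onto $X_F$ --- commutes with arbitrary unions and intersections, while $\kappa^{<\kappa} = \kappa$ makes the subspace topology on $X_F$ closed under intersections of $<\kappa$ sets, $i_F$ maps every $t_F$-open set (a union of $<\kappa$-intersections of members of $b_F$) to a relatively open subset of $X_F$.

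To see $X_F$ is $\Gdelta$, I would show $X_F = Y$, where $Y$ is the set of $x \in {}^F 2$ satisfying the expected ``Tarski-type'' conditions witnessing that $x$ codes, through the valuation $\id\kappa$, the $F$-theory of some $L$-structure on $\kappa$: namely (0) if $L$ has equality, $x(v_a = v_b) = 1$ iff $a = b$ for all $a, b \in \kappa$; (1) for each $\varphi \in F$, exactly one of $x(\varphi)$, $x(\neg\varphi)$ equals $1$; (2) for each $\bigwedge\Phi \in F$, $x(\bigwedge\Phi) = 1$ iff $x(\phi) = 1$ for every $\phi \in \Phi$; (3) for each $\exists(v_i : i \in I)\varphi \in F$, with $I \in [\kappa]^{<\kappa}$, $x(\exists(v_i:i\in I)\varphi) = 1$ iff $x(s_a\varphi) = 1$ for some $a \in {}^\kappa\kappa$ with $a(j) = j$ for all $j \in \kappa \setminus I$ --- together with the analogues of (2) and (3) for $\bigvee$ and $\forall$ if these occur among the primitives. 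All formulas mentioned lie in $F$ (subformulas and negations by the closure properties, each $s_a\varphi$ by closure under substitution), so each clause constrains only $F$-indexed coordinates of $x$. One checks that each individual clause is open or closed in ${}^F 2$: the universally quantified half of (2) and the ``$\Leftarrow$'' half of (3) are intersections of $\leq\kappa$ clopen sets, hence closed, whereas the ``$\Leftarrow$'' half of (2) and the ``$\Rightarrow$'' half of (3) are unions of $\leq\kappa$ clopen sets, hence open (there are only $\kappa^{|I|} = \kappa$ relevant valuations $a$). As $|F| = \kappa$ there are at most $\kappa$ clauses altogether, so $Y$ is an intersection of $\leq\kappa$ open sets, i.e.\ $\Gdelta$.

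For $X_F = Y$: the inclusion $X_F \subseteq Y$ is direct, since if $x = i_F(\struktura A)$ then each clause holds because $\struktura A \models {\cdot}[\id\kappa]$ obeys the Tarski conditions, clause (3) additionally using the substitution lemma and that the valuations $a$ with $a(j) = j$ off $I$ are exactly those relevant to $\struktura A \models \exists(v_i : i\in I)\varphi[\id\kappa]$. Conversely, given $x \in Y$, I would define an $L$-structure $\struktura A_x$ with domain $\kappa$ by putting $\bar b \in P^{\struktura A_x}$ iff $x(P(v_{b_0}, \dots, v_{b_{m-1}})) = 1$ for each $m$-ary relation symbol $P$ of $L$ and $\bar b \in {}^m\kappa$ (clause (0) making this consistent if $L$ has equality), and then prove by induction on the complexity of $\varphi \in F$ --- along the well-founded construction trees of $\LL{\kappa^+}\kappa$-formulas --- that $x(\varphi) = 1$ iff $\struktura A_x \models \varphi[\id\kappa]$: the atomic case is the definition of $\struktura A_x$, the cases of $\neg$ and $\bigwedge/\bigvee$ are clauses (1) and (2), and the case of $\exists/\forall$ is clause (3) together with the substitution lemma applied to $s_a\varphi$, whose complexity is that of the strict subformula $\varphi$. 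This yields $x = i_F(\struktura A_x) \in X_F$, so $X_F = Y$ is $\Gdelta$.

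The main obstacle I foresee is the bound on the complexity of $Y$: keeping the defining family to $\leq\kappa$ conditions, each open or closed, is precisely where all three fragment axioms enter essentially --- $|F| = \kappa$, closure under negation and subformulas, and (for clause (3), and for the ``openness'' step in the first paragraph) closure under substitution of variables, which is what permits trading an arbitrary valuation for $\id\kappa$ without leaving $F$ --- in combination with $\kappa^{<\kappa} = \kappa$, which bounds the number of witnessing substitutions and makes the subspace topology on $X_F$ behave well under $<\kappa$-intersections. The remaining care goes into the substitution-lemma bookkeeping ($\struktura A \models s_a\varphi[b]$ iff $\struktura A \models \varphi[b\comp a]$) and confirming that $s_a\varphi$ is strictly below $\exists(v_i : i\in I)\varphi$ in complexity, so that the induction in the previous paragraph is well-founded.
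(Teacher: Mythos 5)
Your proposal is correct and follows essentially the same route as the paper: the homeomorphism is obtained from the observation that closure under substitution reduces every basic set $\mod\kappa\varphi a$ to one of the form $\mod\kappa{s_a\varphi}{\id\kappa}$, and $X_F$ is identified with the set cut out by the Tarski-type clauses (equality, negation, conjunction, quantification with support in $I$), each shown to be $\Gdelta$ via the same open/closed splitting and the bound $\kappa^{<\kappa}=\kappa$ on witnessing substitutions. The only cosmetic difference is that the paper proves $X\subseteq X_F$ by first noting that elements of $X$ agreeing on $\At$ coincide and then building the model from the atomic data, whereas you run the induction directly on $x(\varphi)=1\Leftrightarrow\struktura A_x\models\varphi[\id\kappa]$; the two are interchangeable.
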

\begin{proof}
  Because $F$ is closed under substitution, the collection $b_F$ from which the topology $t_F$ is obtained is actually equal to 
  $\{\mod\kappa\varphi{\id\kappa}:\varphi\in F\}.$ 
  Using this fact, it 
  is not hard to see that the injection $i_F$ is a homeomorphism between $\Modf F$ and its image $X_F$.
  
  To see that $X_F\subseteq {}^F 2$ is $\Gdelta$, we define the following subsets of ${}^F 2$. For any $h\in{}^\kappa\kappa$, we denote by $\mathrm{supp}(h)$ the set of those $\alpha\in\kappa$ for which $h(\alpha)\neq\alpha$.
 \begin{align*}
  X_0=\{x\in{}^F 2&: x(\psi)=1\textrm{ iff } x(\neg\psi)=0\textrm{ for all $\psi\in F$}\};\\
  X_1=\{x\in{}^F 2&:\textrm{if $\psi\in F$ and $\psi=\bigwedge\Phi$ for some $\Phi\in{[F]}^{\leq\kappa}$,}\\ &\textrm{ then }x(\psi)=1\textrm{ iff } \textrm{for all $\varphi\in\Phi$ we have }x(\varphi)=1\};\\
  X_2=\{x\in{}^F 2&:\textrm{if $\psi\in F$ and $\psi=\exists(v_\beta:\beta\in I)\varphi$ where $\varphi\in F$ and $ I\in{[\kappa]}^{<\kappa}$,}\\ &\textrm{ then }x(\psi)=1\textrm{ iff } x(s_h\varphi)=1
  \\&\hspace{4cm}
  \textrm{for some $h\in{}^{\kappa}\kappa$ } \textrm{such that }\mathrm{supp}(h)\subseteq I
\};\\
  X_3=\{x\in{}^F 2&: \textrm{for all $i,j\in\kappa$ we have }x(v_i=v_j)=1\textrm{ iff } i=j\},
\end{align*}
and we let $X$ denote their intersection.
We claim that the $X_i$'s are all $\Gdelta$ subsets of ${}^F 2$ and show this in detail for $X_1$.
For $\psi=\bigwedge \Phi\in F$, 
the set
\[
  X^1_{1,\psi}=\{x\in{}^F 2:x(\psi)=1\textrm{ and } x(\varphi)=1\textrm{ for all $\varphi\in\Phi$}\}
  =\bigcap_{\varphi\in\Phi}N_{\{(\psi,1),(\varphi,1)\}}
\]
is $\Gdelta$, while the set
\[
  X^0_{1,\psi}=\{x\in{}^F 2:x(\psi)=0\textrm{ and } x(\varphi)=0\textrm{ for some $\varphi\in\Phi$}\}
  =\bigcup_{\varphi\in\Phi}N_{\{(\psi,0),(\varphi,0)\}}
\]
is open. Therefore 
$X_1=
\bigcap 
\{X^0_{1,\psi}\cup X^1_{1,\psi}:\psi=\bigwedge\Phi\in F\}$ is also a $\Gdelta$ subset of the Cantor space ${}^F 2$. 
That $X_0$, $X_2$ and $X_3$ are also $\Gdelta$ can be seen similarly; in the case of $X_2$, one has to use the fact that $|\{s_h\varphi:h\in{}^\kappa\kappa\}|\leq\kappa$ holds, because $\varphi$ has $<\kappa$ variables and ${\kappa}^{<\kappa}=\kappa$.

Therefore the intersection $X$ of the $X_i$'s is also $\Gdelta$, and it remains to see that $X_F=X$.
It is straightforward to show that for any $\struktura A\in\Mod\kappa L$, we have $i_F(\struktura A)\in X$, and so $X_F\subseteq X$. 
For the other direction, first observe that if $x,y\in X$ and $x\restr\At=y\restr\At$, then $x=y$ also holds, by an easy induction on the complexity of formulas. 
Suppose $x\in X$ is arbitrary. We wish to define a model $\struktura A\in\Mod\kappa L$
such that $x=i_F(\struktura A)$; by the above observation, it is enough to require that ${x\restr\At}={{i_F(\struktura A)}\restr\At}$. 
Clearly, the $L$-model $\struktura A$ whose domain is $\kappa$ and whose relations are defined by letting,
for each $n$-ary relation symbol $R$ of $L$ and $\alpha_1,\ldots,\alpha_n\in\kappa$,
\[(\alpha_1,\ldots,\alpha_n)\in R^{\struktura A}\textrm{ iff } x(R(v_{\alpha_1},\ldots,v_{\alpha_n}))=1,
\]
satisfies these requirements.
\end{proof}

Let $\psi$ be an arbitrary sentence in $\anform{\LL{\kappa^+}\kappa}$ (recall that this means $\psi$ is a second order sentence of the form $\exists \bar R\,\varphi(\bar R)$ where $\bar R$ is a set of $\leq\kappa$ many symbols disjoint from $L$ and $\varphi(\bar R)$ is an $\LL{\kappa^+}\kappa$-sentence in the language expanded by~$\bar R$). 
We let $\Mod\kappa \psi$ be the set of elements of $\Mod\kappa L$ which are models of $\psi$. If $F$ is a fragment of $\LL{\kappa^+}\kappa$, we denote by $\Modt\psi F$ the corresponding subspace of the topological space $\Modf F$, or in other words, $\Modt\psi{F}$ is obtained by endowing $\Mod\kappa \psi$ with the topology $t_F$. We furthermore define 
\[X^\psi_F=\{i_F(\struktura A):\struktura A\in\Mod\kappa \psi\},\]
the set of elements of $X_F$ corresponding to models of $\psi$. 
\begin{corollary}\label{models2}
  If $\psi$ is a sentence of $\anform{\LL{\kappa^+}\kappa}$ and $F$ is a fragment of $\LL{\kappa^+}\kappa$, 
  then $X^\psi_{F}$ is an analytic subset of ${}^F 2$. Furthermore, 
  the map $i_F\restr\Modt\psi{F}$ is a homeomorphism from $\Modt\psi{F}$ onto its image $X^\psi_{F}$ equipped with the subspace topology.
\end{corollary}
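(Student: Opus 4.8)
The plan is to obtain both assertions from Proposition~\ref{models1}, applied not to $L$ but to the language $L'=L\cup\bar R$ obtained by adjoining the new symbols occurring in $\psi=\exists\bar R\,\varphi(\bar R)$. The homeomorphism part needs no new argument: $\Modt\psi F$ is, by definition, a subspace of $\Modf F$, and $X^\psi_F=i_F[\Mod\kappa\psi]$ is the corresponding subset of $X_F=i_F[\Mod\kappa L]$, so restricting the homeomorphism $i_F\colon\Modf F\to X_F$ of Proposition~\ref{models1} to the subspace $\Modt\psi F$ yields a homeomorphism onto $X^\psi_F$ equipped with the topology it inherits from $X_F$, equivalently from ${}^F2$.

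For the analyticity of $X^\psi_F$, I would first fix a fragment $F'$ of $\LL{\kappa^+}\kappa$ in the language $L'$ with $F\subseteq F'$ and $\varphi\in F'$. Such an $F'$ exists: $|L'|\le\kappa$, so $L'$ has at most $\kappa$ atomic formulas, and a single $\LL{\kappa^+}\kappa$-formula has at most $\kappa$ subformulas (by a straightforward induction on the formula); hence closing $F$ together with $\varphi$ and all atomic $L'$-formulas under negation, taking subformulas and substitution of variables produces, using $\kappa^{<\kappa}=\kappa$, a set $F'$ of size $\kappa$ satisfying Definition~\ref{fragment def}. Since Proposition~\ref{models1} holds verbatim for fragments of $\LL{\kappa^+}\kappa$ in any language of size $\le\kappa$, in particular in $L'$, we get that $X_{F'}=i_{F'}[\Mod\kappa{L'}]$ is a $\Gdelta$ subset of ${}^{F'}2$. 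Because $\varphi$ is a sentence, $i_{F'}(\struktura A)(\varphi)=1$ iff $\struktura A\models\varphi$ for every $\struktura A\in\Mod\kappa{L'}$, so the set $Y=X_{F'}\cap N_{\{(\varphi,1)\}}$ is exactly the $i_{F'}$-image of the set of $L'$-models of $\varphi$; as the intersection of a $\Gdelta$ set with a basic clopen set it is itself $\Gdelta$, hence $\kappa$-Borel, hence an analytic subset of ${}^{F'}2\cong{}^\kappa2$.

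It remains to pass from $Y$ to $X^\psi_F$ via the continuous ``reduct'' map. Let $r\colon{}^{F'}2\to{}^F2$ be the coordinate restriction $x\mapsto x\restr F$, which is continuous because $F\subseteq F'$. The claim is that $r[Y]=X^\psi_F$: a model $\struktura A\in\Mod\kappa L$ satisfies $\psi$ iff it has an $L'$-expansion $\struktura{A}'$ with $\struktura{A}'\models\varphi$, and for any such expansion, any $\vartheta\in F$ (an $L$-formula, not mentioning the symbols $\bar R$) and any valuation $a$ we have $\struktura A\models\vartheta[a]$ iff $\struktura{A}'\models\vartheta[a]$; hence $i_F(\struktura A)=i_{F'}(\struktura{A}')\restr F=r\bigl(i_{F'}(\struktura{A}')\bigr)$, and the two inclusions follow by reading this forwards and backwards. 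Now, $Y$ being analytic, it is a continuous image of some closed $C\subseteq{}^\kappa\kappa$; composing the witnessing map with $r$ exhibits $X^\psi_F=r[Y]$ as a continuous image of $C$ as well, so $X^\psi_F$ is an analytic subset of ${}^F2$, as required. The one point that calls for care is precisely this identification $r[Y]=X^\psi_F$ — that forming the $L$-reduct of an $L'$-model of $\varphi$ corresponds, under $i_{F'}$ and $i_F$, to the coordinate restriction $r$ — which works exactly because $F\subseteq F'$ and the formulas in $F$ involve none of the symbols of $\bar R$; the remaining facts (that $\Gdelta$, and more generally $\kappa$-Borel, subsets of ${}^\kappa2$ are analytic, and that analytic sets are closed under continuous images) are routine.
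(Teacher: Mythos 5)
Your proposal is correct and follows essentially the same route as the paper: the paper likewise passes to the fragment $F'$ generated by $F\cup\{\varphi(\bar R)\}\cup\bar R$ in the expanded language, identifies the models of $\varphi$ with the $\Gdelta$ set $X^{\varphi}_{F'}=X_{F'}\cap N_{\{(\varphi,1)\}}$ via Proposition~\ref{models1}, and obtains $X^\psi_F$ as its image under the continuous restriction map $x\mapsto x\restr F$ (the reduct map). You merely spell out the details the paper leaves implicit, namely the existence of $F'$, the verification that the reduct map corresponds to coordinate restriction, and the closure properties of analytic sets.
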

\begin{proof} 
As we have seen at the beginning of the previous proof, it is enough to show that $X^\psi_F$ is analytic.
  First, in the case when $\psi\in F$ (and therefore is an $\LL{\kappa^+}\kappa$-sentence), we have 
$X^\psi_F=
 X_F\cap N_{\{(\psi,1)\}}.
$
Thus, $X^\psi_F$ is a $\Gdelta$ subset of ${}^F 2$ by Proposition~\ref{models1}.

Now, in general, suppose that $\psi$ is the sentence $\exists \bar R\varphi(\bar R)$.
 Let $F'$ be the fragment 
 generated (in the expanded language) by $F\cup \{\varphi(\bar R)\}\cup \bar R$.
Then we have that $X^\psi_F$ is the image of the $\Gdelta$ set $X^\varphi_{F'}$ under the continuous map ${}^{F'}2\longrightarrow {}^F 2,$ $x\mapsto x\restr F$ and is therefore analytic. (Equivalently, $\Modt\psi F$ is the image of $\Modt\varphi{F'}$ under the continuous map defined by taking the $L$-reducts of models for 
the expanded language.)
\end{proof}

We now turn to the ``number'' of pairwise non $(F,H)$-elementarily embeddable models of an arbitrary sentence $\psi\in\anform{\LL{\kappa^+}\kappa}$, where $H\subseteq\inj\kappa$ and $F$ is a fragment of $\LL{\kappa^+}\kappa$. 
We will say that $\psi$ has \emph{perfectly many non $(F,H)$-elementarily embeddable models} if 
the binary relation of $(F,H)$-elementary embeddability on $\Mod\kappa\psi$ has a $t_{F}$-perfect independent set. (Note that we may speak about $t_{F}$-perfect sets since $\Modt\psi F$ is homeomorphic to a subset of the $\kappa$-Baire space.) 

We remark that, as Proposition~\ref{whichever fragment} below shows, 
the choice of the fragment that generates the topology on $\Mod\kappa\psi$ is actually irrelevant in the above definition. That is,
given any fragment $F'$ of $\LL{\kappa^+}\kappa$,
the sentence $\psi$ has perfectly many non $(F,H)$-elementarily embeddable models iff the binary relation of $(F,H)$-elementary embeddability on $\Mod\kappa\psi$ has a $t_{F'}$-perfect independent set.

Below, by a $t_F$-Borel subset of $\Mod\kappa\psi$, we mean a $\kappa$-Borel subset of $\Modt\psi F$,
and a map $f:X\longrightarrow \Mod\kappa\psi$ (where $X$ is a topological space) is $t_F$-Borel iff the inverse images of $t_F$-Borel subsets of $\Mod\kappa\psi$ are $\kappa$-Borel subsets of $X$.
\begin{proposition}\label{Borel maps}
  Let $F$ and $F'$ be arbitrary fragments of $\LL{\kappa^+}\kappa$.
  \begin{enumerate}[1.]
    \item A subset of $\Mod\kappa\psi$ is $t_F$-Borel iff it is $t_{F'}$-Borel.
    \item A map $f:{}^\kappa 2\longrightarrow \Mod\kappa\psi$ is $t_F$-Borel iff it is $t_{F'}$-Borel.
  \end{enumerate}
\end{proposition}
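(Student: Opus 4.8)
The plan is to show that every fragment $F$ of $\LL{\kappa^+}\kappa$ induces exactly the same family of $\kappa$-Borel subsets of $\Mod\kappa L$, namely the one obtained from the canonical topology $t_{\At}$. Part~1 then follows by intersecting with the subspace $\Mod\kappa\psi$, and part~2 is an immediate consequence of part~1: a map $f\colon{}^\kappa 2\longrightarrow\Mod\kappa\psi$ is $t_F$-Borel iff $f^{-1}(B)$ is $\kappa$-Borel in ${}^\kappa 2$ for every $t_F$-Borel $B\subseteq\Mod\kappa\psi$, and this requirement is unchanged if the collection of $t_F$-Borel subsets of $\Mod\kappa\psi$ is unchanged.

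First I would record that, as noted in the proof of Proposition~\ref{models1} (using that $F$ is closed under substitution), the topology $t_F$ is generated by the family $b_F=\{\mod\kappa\varphi{\id\kappa}:\varphi\in F\}$, which has size $\leq\kappa$. Since $\kappa^{<\kappa}=\kappa$, there are only $\leq\kappa$ many intersections of $<\kappa$-sized subfamilies of $b_F$, so every $t_F$-open subset of $\Mod\kappa L$ is a union of at most $\kappa$ members of this fixed $\leq\kappa$-sized family of intersections. Hence the $\kappa$-Borel subsets of $\Modf F$ form exactly the smallest family $\mathcal{B}_F$ of subsets of $\Mod\kappa L$ that contains $b_F$ and is closed under complementation and under unions and intersections of size $\leq\kappa$. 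It therefore suffices to prove $\mathcal{B}_F=\mathcal{B}_{\At}$ for an arbitrary fragment $F$.

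The inclusion $\mathcal{B}_{\At}\subseteq\mathcal{B}_F$ is trivial, since $\At\subseteq F$ by Definition~\ref{fragment def} and hence $b_{\At}\subseteq b_F$. For $\mathcal{B}_F\subseteq\mathcal{B}_{\At}$, I would prove by induction on the complexity of an $\LL{\kappa^+}\kappa$-formula $\varphi$ that $\mod\kappa\varphi a\in\mathcal{B}_{\At}$ for every valuation $a\in{}^\kappa\kappa$. For atomic $\varphi$ this is clear: if $\varphi=R(v_{i_1},\dots,v_{i_n})$ then $\mod\kappa\varphi a=\mod\kappa{R(v_{a(i_1)},\dots,v_{a(i_n)})}{\id\kappa}\in b_{\At}$, and if $\varphi$ is $v_i=v_j$ then $\mod\kappa\varphi a$ is $\emptyset$ or $\Mod\kappa L$. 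If $\varphi=\neg\psi$ then $\mod\kappa\varphi a=\Mod\kappa L\setminus\mod\kappa\psi a\in\mathcal{B}_{\At}$ by the induction hypothesis. If $\varphi=\bigwedge\Phi$ or $\varphi=\bigvee\Phi$ with $|\Phi|\leq\kappa$, then $\mod\kappa\varphi a$ is the intersection, respectively the union, of $\{\mod\kappa\phi a:\phi\in\Phi\}$, again in $\mathcal{B}_{\At}$. Finally, if $\varphi=\exists(v_\beta:\beta\in I)\psi$ or $\varphi=\forall(v_\beta:\beta\in I)\psi$ with $I\in{[\kappa]}^{<\kappa}$, then $\mod\kappa\varphi a$ is the union, respectively the intersection, of the sets $\mod\kappa\psi{a'}$ as $a'$ ranges over the valuations agreeing with $a$ on $\kappa\setminus I$; there are at most $\kappa^{|I|}\leq\kappa$ many such $a'$ by $\kappa^{<\kappa}=\kappa$, so this set is in $\mathcal{B}_{\At}$. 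In particular $b_F\subseteq\mathcal{B}_{\At}$, whence $\mathcal{B}_F\subseteq\mathcal{B}_{\At}$ and so $\mathcal{B}_F=\mathcal{B}_{\At}=\mathcal{B}_{F'}$. Passing to the subspace $\Mod\kappa\psi$ preserves this equality, because for any subspace $Y$ of a space $Z$ the $\kappa$-Borel subsets of $Y$ are exactly the sets $B\cap Y$ with $B$ $\kappa$-Borel in $Z$ (the family $\{B\subseteq Z: B\cap Y\text{ is }\kappa\text{-Borel in }Y\}$ contains the open subsets of $Z$ and is closed under complementation and under $\leq\kappa$-sized unions and intersections). This proves part~1, and part~2 follows as explained above.

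The only genuine care needed is the cardinality bookkeeping: one must check that every $t_F$-open set, though possibly a union of more than $\kappa$ basic open sets, is nonetheless a union of $\leq\kappa$ sets from a fixed $\leq\kappa$-sized family (this uses $\kappa^{<\kappa}=\kappa$ together with the fact, from the proof of Proposition~\ref{models1}, that $b_F$ may be taken of size $\leq\kappa$), and that the quantifier clause of the induction produces only $\leq\kappa$-sized unions and intersections (again via $\kappa^{<\kappa}=\kappa$, as every $\LL{\kappa^+}\kappa$-formula quantifies over $<\kappa$ many variables at once). Once these points are in place the argument is entirely routine.
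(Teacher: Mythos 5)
Your proof is correct and follows essentially the same route as the paper: the paper's entire argument is the remark that ``an easy induction'' on formula complexity shows $\mod\kappa\varphi a$ is $t_F$-Borel for every $\varphi\in\LL{\kappa^+}\kappa$, which is exactly the induction you carry out (routed through $\At$ as a reference fragment rather than directly comparing $F$ and $F'$, an inessential difference). Your extra care about the cardinality bookkeeping for bases and open sets, and about passing to the subspace $\Mod\kappa\psi$, fills in details the paper leaves implicit.
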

\begin{proof}
  An easy induction shows that for any $\varphi\in\LL{\kappa^+}\kappa$ (and therefore for any $\varphi\in F'$) and valuation $a\in{}^\kappa\kappa$, the set $\mod\kappa\varphi a$ is $t_F$-Borel. Consequently, all $t_{F'}$-Borel sets are $t_F$-Borel sets as well. 
  By symmetry, we have item~1, of which item~2 is a direct consequence. 
\end{proof}

\begin{proposition}\label{whichever fragment}
  Let $F$ and $F'$ be arbitrary fragments of $\LL{\kappa^+}\kappa$, and suppose $R$ is a binary relation on $\Mod\kappa\psi$. Then $R$ has a $t_F$-perfect independent set iff it has a $t_{F'}$-perfect independent set.
\end{proposition}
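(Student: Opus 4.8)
The plan is to exploit that the statement is symmetric in $F$ and $F'$, so I would prove only the forward implication and then invoke symmetry. The central idea is to reformulate ``$R$ has a $t_F$-perfect independent set'' in terms of \emph{Borel injections of ${}^\kappa 2$} into $\Mod\kappa\psi$: the conditions ``injective'' and ``has $R$-independent image'' make no reference to a topology, and the only topology-sensitive condition, Borelness, is precisely the content of Proposition~\ref{Borel maps}(2).

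So suppose $P\subseteq\Mod\kappa\psi$ is $t_F$-perfect and $R$-independent. First I would produce a Borel injection out of $P$: since $P$ is $t_F$-perfect, $i_F[P]$ is a perfect subset of ${}^F 2$, which I identify with ${}^\kappa 2\subseteq{}^\kappa\kappa$ via a bijection $\kappa\to F$. As $i_F[P]$ then contains a perfect subset --- itself --- the equivalence cited in the introduction (\cite[Proposition~2]{Friedman2014}) yields a continuous injection of ${}^\kappa 2$ into $i_F[P]$; composing it with the homeomorphism $i_F^{-1}$ of Corollary~\ref{models2} gives a $t_F$-continuous, hence $t_F$-Borel, injection $g\colon{}^\kappa 2\to\Modt\psi F$ with $\textit{Im}(g)\subseteq P$, so that $\textit{Im}(g)$ is again $R$-independent.

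Next I would transfer the Borel structure. Viewed as a map ${}^\kappa 2\to\Mod\kappa\psi$, $g$ is $t_F$-Borel, hence $t_{F'}$-Borel by Proposition~\ref{Borel maps}(2). Composing with the homeomorphism $i_{F'}\restr\Modt\psi{F'}$ of Corollary~\ref{models2}, the map $i_{F'}\comp g\colon{}^\kappa 2\to X^\psi_{F'}\subseteq{}^{F'}2$ is a Borel injection, and identifying ${}^{F'}2$ with ${}^\kappa 2\subseteq{}^\kappa\kappa$ again, its image $i_{F'}[\textit{Im}(g)]$ is the image of a Borel injection of ${}^\kappa 2$. By \cite[Proposition~2]{Friedman2014} it therefore contains a perfect subset $Z$, with $Z\subseteq X^\psi_{F'}$. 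Then $Q:=i_{F'}^{-1}[Z]$ has $i_{F'}[Q]=Z$ perfect, so $Q$ is $t_{F'}$-perfect, while $Q\subseteq\textit{Im}(g)\subseteq P$ shows that $Q$ is $R$-independent. Thus $Q$ is a $t_{F'}$-perfect $R$-independent set, and exchanging the roles of $F$ and $F'$ gives the converse.

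I do not expect a real obstacle: once perfectness is traded for Borel injections, the argument is bookkeeping on top of Proposition~\ref{Borel maps} and the characterization of sets containing perfect subsets from \cite{Friedman2014}. The two points that need a little care are that ``perfect'' is an absolute notion --- a set of the form $[T]$ for a perfect tree $T$ --- and so is insensitive to the ambient topology, and that the homeomorphisms $i_F$, $i_{F'}$ carry the single fixed Borel structure on $\Mod\kappa\psi$ (as isolated in Proposition~\ref{Borel maps}(1)) onto those of $X_F$, $X_{F'}$; this is exactly what makes the two appeals to \cite{Friedman2014} legitimate on both sides.
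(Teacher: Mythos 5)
Your proposal is correct and follows essentially the same route as the paper: both arguments trade ``has a $t_F$-perfect $R$-independent set'' for ``admits a $t_F$-Borel injection of ${}^\kappa 2$ with $R$-independent image'' via \cite[Proposition~2]{Friedman2014}, and then observe that Borelness is fragment-independent by Proposition~\ref{Borel maps}(2). The paper simply states this equivalence and concludes at once, whereas you unpack the two directions explicitly through the embeddings $i_F$ and $i_{F'}$; the bookkeeping is accurate.
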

  \begin{proof}
    The binary
    relation $R$ has a $t_F$-perfect independent set iff there is a {$t_F$-Borel} (instead of ``$t_F$-continuous'') injection of ${}^\kappa 2$ into $\Mod\kappa\psi$ such that 
    \begin{equation*}\tag{$\ast$}\label{reduction}
  \textrm{for all }x\neq y\in {}^\kappa 2, \textrm{ we have } (r(x),r(y))\notin R;
\end{equation*}
see for example    
\cite[Proposition~2]{Friedman2014}
for a proof. (A map satisfying (\ref{reduction}) is called a \emph{reduction} of $\id{{}^\kappa 2}$ to $R$.)
Therefore, using item~2 of Proposition~\ref{Borel maps} above, we obtain the conclusion of this proposition immediately.
\end{proof}

We now state and prove the main theorem of this section.
\begin{theorem}\label{elem emb}
  Assume $\Iminus\kappa$ and either that $\Diamond_\kappa$ holds or that $\kappa$ is inaccessible.
  Suppose that $H\subseteq\inj\kappa$, $F$ is a fragment of $\LL{\kappa^+}\kappa$ and $\psi$ is a sentence of $\anform{\LL{\kappa^+}\kappa}$. Suppose that either 
  \begin{enumerate}[1.]
    \item $H$ is a $K_\kappa$ subset of the $\kappa$-Baire space, or
    \item $H$ is a $K_\kappa$ subset of the product space $({}^\kappa\kappa,\producttop)$ 
      and $F\subseteq\LL{\kappa^+}\omega$.
  \end{enumerate}
If there are at least $\kappa^+$ many pairwise non $(F,H)$-elementarily embeddable models  
in $\Mod\kappa\psi$,
then there are perfectly many such models.
\end{theorem}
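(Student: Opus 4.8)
The plan is to deduce Theorem~\ref{elem emb} from Corollary~\ref{fkappa orders2}(1). First I would use Corollary~\ref{models2} to identify $\Mod\kappa\psi$, equipped with the topology $t_F$, with an analytic subset $X$ of the $\kappa$-Baire space: the map $i_F$ is a homeomorphism of $\Modt\psi F$ onto the analytic set $X^\psi_F\subseteq{}^F 2$, and ${}^F 2$ with the generalized Cantor topology embeds homeomorphically into ${}^\kappa\kappa$ via any bijection $F\leftrightarrow\kappa$. Under this identification the $(F,H)$-elementary embeddability relation $R$ on $\Mod\kappa\psi$ becomes a binary relation on $X$; a set of $\geq\kappa^+$ pairwise non $(F,H)$-elementarily embeddable models is precisely an $R$-independent subset of size $\geq\kappa^+$, and $R$-independence as well as perfectness transfer under the homeomorphism, so that $\psi$ has ``perfectly many'' non $(F,H)$-elementarily embeddable models exactly when $R$ has a perfect independent subset of $X$ (the choice of fragment being irrelevant by Proposition~\ref{whichever fragment}). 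Hence it suffices to apply the dichotomy of Corollary~\ref{fkappa orders2}(1) to $R$, viewed on $X$.

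For this I would write $R=R_S$, where
\[
  S=\bigl\{(h,\struktura A,\struktura B)\in H\times X\times X:\ h\text{ witnesses the }(F,H)\text{-elementary embeddability of }\struktura A\text{ into }\struktura B\bigr\},
\]
with $H$ given the $\kappa$-Baire subspace topology in case~1 and the $\producttop$ subspace topology in case~2; in both cases $H$ is then a $K_\kappa$ topological space, and Corollary~\ref{fkappa orders2}(1) will apply once $S$ is shown to be \emph{closed} in $H\times X\times X$. Working in $\Modt\psi F$ via the homeomorphism, $S=\bigcap_{\varphi\in F}\bigcap_{a}S_{\varphi,a}$, where
\[
  S_{\varphi,a}=\bigl\{(h,\struktura A,\struktura B):\ \struktura A\models\varphi[a]\iff\struktura B\models\varphi[h\comp a]\bigr\}.
\]
Since an $\LL{\kappa^+}\kappa$-formula $\varphi$ has a set $J=J_\varphi$ of $<\kappa$ many free variables and the truth of $\varphi$ under a valuation depends only on its values on $J$, the set $S_{\varphi,a}$ depends only on $a\restr J$; hence $S$ is an intersection of at most $\kappa$ (using $\kappa^{<\kappa}=\kappa$) of the $S_{\varphi,a}$, and it is enough to show that each $S_{\varphi,a}$ is closed --- in fact I expect it to be clopen.

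The crux, and the step I expect to be the main obstacle, is the clopenness of the set $W_{\varphi,a}:=\{(h,\struktura B):\struktura B\models\varphi[h\comp a]\}$; granting this, $S_{\varphi,a}$ is a finite Boolean combination of $W_{\varphi,a}$, of $W_{\neg\varphi,a}$, and of the cylinders over $\mod\kappa\varphi a$ and $\mod\kappa{\neg\varphi}a$ (all viewed, via the obvious projections, as subsets of $H\times X\times X$), the latter two being clopen in $\Modf F$ precisely because $\neg\varphi\in F$. Noting that whether $\struktura B\models\varphi[h\comp a]$ depends only on the restriction of $h$ to the set $a[J]$ (which has size $<\kappa$), one obtains
\[
  W_{\varphi,a}=\bigcup_{c\colon J\to\kappa}\ \bigl\{h\in H:(h\comp a)\restr J=c\bigr\}\times\mod\kappa\varphi c,
\]
where each set $\mod\kappa\varphi c$ makes sense (the free variables of $\varphi$ lying in $J$) and is clopen in $\Modf F$. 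In case~1, each factor $\{h\in H:(h\comp a)\restr J=c\}$ is determined by $<\kappa$ coordinates of $h$ and is therefore clopen in the $\kappa$-Baire subspace topology (again by $\kappa^{<\kappa}=\kappa$), so $W_{\varphi,a}$ is open; in case~2, the hypothesis $F\subseteq\LL{\kappa^+}\omega$ forces $J$, hence $a[J]$, to be finite, so each such factor is determined by finitely many coordinates and is clopen in the $\producttop$ subspace topology, and again $W_{\varphi,a}$ is open. Replacing $\varphi$ by $\neg\varphi$ shows that the complement of $W_{\varphi,a}$ equals $W_{\neg\varphi,a}$ and is open as well, so $W_{\varphi,a}$ is clopen. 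Since an arbitrary intersection of clopen sets is closed (its complement being a union of open sets), $S$ is closed in $H\times X\times X$.

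Finally, since $H$ is a $K_\kappa$ topological space and $S$ is closed, Corollary~\ref{fkappa orders2}(1) yields that either all $R$-independent subsets of $X$ have size $\leq\kappa$ or $X$ has a perfect $R$-independent subset; combined with the reduction of the first paragraph this proves the theorem. The two cases diverge exactly at the clopenness of $W_{\varphi,a}$: the product topology $\producttop$ makes the coordinate sets $\{h:(h\comp a)\restr J=c\}$ clopen only when $J$ is finite --- which is why case~2 is restricted to $F\subseteq\LL{\kappa^+}\omega$ --- whereas the finer $\kappa$-Baire topology handles any $<\kappa$-sized $J$ and so imposes no restriction on the fragment.
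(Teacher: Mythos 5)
Your proposal is correct and follows essentially the same route as the paper: identify $\Mod\kappa\psi$ with the analytic set $X^\psi_F$ via Corollary~\ref{models2}, express $(F,H)$-elementary embeddability as the projection $R_S$ of a set $S\subseteq H\times X^\psi_F\times X^\psi_F$, prove $S$ closed (with the two cases diverging exactly where you say they do, namely at whether the set of $h$'s agreeing on the $<\kappa$-sized versus finite set of relevant coordinates is open in the chosen topology on $H$), and invoke Corollary~\ref{fkappa orders2}(1). The only cosmetic difference is that the paper verifies closedness by exhibiting a basic neighborhood around each point of the complement of $S$ (phrased via the substitution $s_h\varphi$ and the identity valuation), whereas you write $S$ as an intersection of clopen sets $S_{\varphi,a}$; both arguments are sound.
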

Note that since the $\kappa$-Baire topology is finer than the product topology $\producttop$, being a $K_\kappa$ subset of the $\kappa$-Baire space implies being a $K_\kappa$ subset of the product space $({}^\kappa\kappa,\producttop)$. 
This implication is strict if and only if $\kappa$ is not weakly compact. 
Moreover, if $\kappa$ is not weakly compact and $\Iminus\kappa$ holds, then the $K_\kappa$ subsets of the $\kappa$-Baire space are exactly the ones of size $\leq\kappa$ (see Remark~\ref{weakly compact}).

We note that, (as mentioned in the Introduction), it is consistent to also have $2^\kappa>\kappa^+$ together with the set theoretical hypothesis of the above theorem, relative to the consistency of a measurable $\lambda>\kappa$. Furthermore, starting from a stronger situation, we can also assume that $\kappa$ is a weakly compact cardinal such that $\Iminus\kappa$ and $2^\kappa>\kappa^+$ (see Remark~\ref{weakly compact2}).
\begin{proof}
We start with the proof of item~1. Assume, as usual, that $H$ is given the subspace topology induced by the $\kappa$-Baire space and 
 $X^\psi_{F}$ is equipped with the subspace topology induced by the generalized Cantor space ${}^F 2$. 
  
  Define the subset $S$ of $H\times X^\psi_{F}\times X^\psi_{F}$ by letting, for any $h\in H$ and $\struktura A,\struktura B\in\Mod\kappa\psi$,
\begin{align*}
  (h,i_F(\struktura B),i_F(\struktura A))\in S \textrm{ iff }&\textrm{$h$ witnesses that $\struktura A$ is}
\\
&\textrm{$(F,H)$-elementarily embeddable 
into $\struktura B$}.
\end{align*}
    Then, since $F$ is closed under substitution and negation, $(h,i_F(\struktura B),i_F(\struktura A))\in S$ iff for all formulas $\varphi$ in $F$,
    $\struktura A\models\varphi[\id\kappa]$ implies that $\struktura B\models\varphi[{h\comp\id\kappa}]$, or equivalently that $\struktura B\models s_h\varphi[\id\kappa].$
Therefore
\[S=\{(h,y,x)\in H\times X^\psi_{F}\times X^\psi_{F}: \textrm{ for all $\varphi\in F$}, \textrm{ } x(\varphi)=1 \textrm{ implies }y(s_h\varphi)=1\}.\]
 
  \begin{claim}\label{S is closed}
$S$ is a closed subset of $H\times X^\psi_{F}\times X^\psi_{F}$.
\end{claim}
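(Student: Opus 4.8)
The plan is to show that the complement of $S$ in $H\times X^\psi_F\times X^\psi_F$ is open, by producing, around any point not in $S$, a basic open neighbourhood contained in the complement. Fix $(h,y,x)\notin S$. By the displayed description of $S$, there is some formula $\varphi_0\in F$ with $x(\varphi_0)=1$ but $y(s_h\varphi_0)=0$. The idea is that each of these two conditions is ``decided'' by a small (size $<\kappa$) piece of the coordinates, and that $s_h\varphi_0$ depends on $h$ only through finitely many (or $<\kappa$ many) values of $h$, so that a basic open box around $(h,y,x)$ can be chosen to force the same violation.

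First I would make precise which coordinates of $h$ matter. The formula $\varphi_0$ has $<\kappa$ free and bound variables, say those indexed by a set $J\in[\kappa]^{<\kappa}$; then $s_h\varphi_0$ depends only on $h\restr J$. Let $p=h\restr J\in{}^{<\kappa}\kappa$ (identified with a condition in the relevant base for the subspace topology on $H$, whether that base comes from the $\kappa$-Baire topology as in item~1, or — for the variant needed elsewhere — from $\producttop$, where finitely many coordinates suffice when $F\subseteq\LL{\kappa^+}\omega$). For any $h'\in H$ with $p\subseteq h'$ we have $s_{h'}\varphi_0=s_h\varphi_0$. Next, the set $\{z\in X^\psi_F: z(\varphi_0)=1\}=X^\psi_F\cap N_{\{(\varphi_0,1)\}}$ is (relatively) clopen in $X^\psi_F$, and likewise $\{w\in X^\psi_F: w(s_h\varphi_0)=0\}=X^\psi_F\cap N_{\{(s_h\varphi_0,0)\}}$ is clopen; these are the basic clopen sets of the generalized Cantor topology on ${}^F2$ restricted to $X^\psi_F$. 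Now set
\[
W=\big(N_p\cap H\big)\times\big(X^\psi_F\cap N_{\{(s_h\varphi_0,0)\}}\big)\times\big(X^\psi_F\cap N_{\{(\varphi_0,1)\}}\big).
\]
Then $W$ is open in the product $H\times X^\psi_F\times X^\psi_F$ and contains $(h,y,x)$. If $(h',y',x')\in W$, then $x'(\varphi_0)=1$ while $y'(s_{h'}\varphi_0)=y'(s_h\varphi_0)=0$, so by the description of $S$ the triple $(h',y',x')$ is not in $S$. Hence $W$ is disjoint from $S$, and $S$ is closed.

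The one point that needs a little care — and which I expect is the main (mild) obstacle — is the dependence of $s_h\varphi_0$ on $h$: one must check that substitution into $\varphi_0$ only involves $h$ on the index set of variables actually occurring in $\varphi_0$, so that restricting $h$ to a set of size $<\kappa$ pins down $s_h\varphi_0$ completely. This is exactly where the hypothesis $\kappa^{<\kappa}=\kappa$ (from $\Iminus\kappa$) and the definition of $\LL{\kappa^+}\kappa$-formulas (each having $<\kappa$ variables) are used; in the $\producttop$ case one additionally uses that formulas of $\LL{\kappa^+}\omega$ have only finitely many variables, so that a basic $\producttop$-open set (fixing finitely many coordinates of $h$) already determines $s_h\varphi_0$. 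Everything else is the routine verification that the relevant sets $N_{\{(\varphi,1)\}}$, $N_{\{(\varphi,0)\}}$ are subbasic (cl)open in ${}^F2$, which is immediate from the definition of the generalized Cantor topology.
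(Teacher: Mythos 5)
Your overall strategy is exactly the paper's: exhibit a witnessing formula $\varphi_0$ with $x(\varphi_0)=1$ and $y(s_h\varphi_0)=0$, and surround $(h,y,x)$ with a basic open box on which this violation persists. The two $X^\psi_F$-factors of your box are fine. The problem is in the $H$-factor: you take $J$ to be the set of \emph{all} variables, free and bound, occurring in $\varphi_0$, and assert $|J|<\kappa$. That is false in general for $\LL{\kappa^+}\kappa$ (and even for $\LL{\kappa^+}\omega$): a formula such as $\bigwedge_{\alpha<\kappa}\exists v_{\alpha+1}\,R(v_0,v_{\alpha+1})$ has one free variable but $\kappa$ many bound ones, since conjunctions of $\kappa$ many formulas are permitted. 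Only the number of \emph{free} variables is bounded by $<\kappa$ (respectively $<\omega$ when $F\subseteq\LL{\kappa^+}\omega$) by the definition of the logic. When $|J|=\kappa$, the set $N_{h\restriction J}\cap H$ is not open in the $\kappa$-Baire topology (let alone in $\producttop$), so your neighbourhood $W$ need not be open and the argument breaks at that step.

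The repair is what the paper does: restrict $h$ only on the set $\Delta(\varphi_0)$ of free variables, so that $N_1=N_{h\restriction\Delta(\varphi_0)}\cap H$ is genuinely open ($|\Delta(\varphi_0)|<\kappa$, and finite in the $\producttop$ case). The price is that for $h'\in N_1$ one no longer has the syntactic identity $s_{h'}\varphi_0=s_h\varphi_0$; the two formulas may differ in their bound variables. One instead argues semantically: since the elements of $H$ are injective, $s_{h'}\varphi_0$ and $s_h\varphi_0$ are alphabetic variants, hence satisfied by the same models under the same valuations, and therefore $z(s_{h'}\varphi_0)=z(s_h\varphi_0)$ for every $z\in X^\psi_F$. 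With that substitute for your syntactic equality, the rest of your argument goes through verbatim.
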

\noindent\textit{Proof of Claim~\ref{S is closed}.}
  We prove that the complement $U=H\times X^\psi_{F}\times X^\psi_{F} - S$ is open. Suppose that $(h,y,x)\in U$, or in other words, there exists $\varphi\in F$ such that $x(\varphi)=1$ and $y(s_h\varphi)=0$. 
  Then, since the set $\Delta(\varphi)$ of free variables of $\varphi$ is of size $<\kappa$, the set $N_1=N_{h\restriction{\Delta(\varphi)}}\cap H$ is an open subset of $H$. 
  Furthermore, $h'\in N_1$ implies that for all $x\in X^\psi_F$, we have $x(s_{h'}\varphi)=x(s_h\varphi)$. 
  Thus, denoting by $N_2$ and $N_3$ the open subsets of $X^\psi_{F}$ determined by the conditions $z(\varphi)=1$ and $z(s_h\varphi)=0$, respectively, we obtain an open neighborhood $N_1\times N_2\times N_3$ of $(h,y,x)$ which is also a subset of $U$.
  This completes the proof of Claim~\ref{S is closed}.
  \\[5 pt] 
  Clearly, the projection $R_S$ of $S$ onto $X^\psi_{F}\times X^\psi_{F}$ is the relation corresponding to $(F,H)$-elementary embeddability on $\Mod\kappa\psi$ (i.e., 
$(i_F(\struktura B),i_F(\struktura A))\in R_S$ iff \struktura A is $F$-elementarily embeddable into \struktura B by $H$).
By Corollary~\ref{models2}, $X^\psi_F$ is an analytic subset of the generalized Cantor space ${}^{F}2$, and $H$ is a $K_\kappa$ topological space by the assumption of item~1.
Thus, by
Corollary~\ref{fkappa orders2}, we have the required conclusion.
\\[5pt]
\indent
To see item~\textrm{2}, we equip $H$ with the subspace topology induced by 
the \emph{product topology $\producttop$} on the set ${}^\kappa\kappa$. 
As before, $X^\psi_F$ is given the subspace topology induced by the generalized Cantor space ${}^F 2$, and the set $S$ is defined as above. 
Then, using the assumption $F\subseteq\LL{\kappa^+}\omega$ 
of item~2, one can show that
\[
  \textrm{$S$ is a closed subset of the space $H\times X^\psi_F\times X^\psi_F$}.
\] 
  This can be seen by 
  using
  the argument in the proof of Claim~\ref{S is closed} 
  and taking note of the fact that, since
  the set of free variables of any $\varphi\in F$ is finite 
  by the 
  assumption $F\subseteq\LL{\kappa^+}\omega$, 
  the set denoted by $N_1$ in the proof of Claim~\ref{S is closed} is an open subset of $H$ even when the topology on $H$ is inherited from  
  the product space $({}^\kappa\kappa,\producttop)$.

Furthermore, $H$ is a $K_\kappa$ topological space by the first assumption of item~2, 
and $X^\psi_F$ is an analytic subset of the generalized Cantor space ${}^F 2$ by Corollary~\ref{models2}. Therefore Corollary~\ref{fkappa orders2} can again be applied to obtain the required conclusion.
\end{proof}
\begin{remark}
  Suppose that $\Phi$ is an arbitrary $\kappa$-sized subset of $\LL{\kappa^+}\kappa$-formulas which is closed under substitution.
  For a subset $H$ of ${}^\kappa\kappa$, 
  consider the 
  models in $\Mod\kappa\psi$ up to 
  maps $h\in H$ which preserve the formulas in $\Phi$ (i.e., maps $h:\struktura A \longrightarrow \struktura B$ such that for all $\varphi\in\Phi$ and valuations $a\in{}^\kappa A$, if $\struktura A\models\varphi[a]$ then $\struktura B\models\varphi[h\comp a]$. Note that in this case, such maps $h$ need not be injective). 
  Using the topology $t_F$, where $F$ is the fragment generated by $\Phi$,
 the proof of Theorem~\ref{elem emb} can be generalized 
 to yield an analogous statement about the ``number of models'' up to such maps. 
  This version seems to cover all natural generalizations of Theorem~\ref{elem emb}.

  Specifically, when $\Phi$ is the set of those atomic formulas which do not contain the $=$ symbol, a map $h$ preserves $\Phi$ iff it is a homomorphism. Therefore, when $H$ is a $K_\kappa$ subset of the product space $({}^\kappa\kappa,\producttop)$,
  we have that the following holds under the assumptions of Theorem~\ref{fkappa orders}: \emph{ if there are at least $\kappa^+$ many models in $\Mod\kappa\psi$ such that no $h\in H$ is a homomorphism from one into another, then there are perfectly many such models.}
\end{remark}

We conclude this section by mentioning some interesting special cases of Theorem~\ref{elem emb}.

\begin{corollary}\label{elem emb2} 
  Assume that $\Iminus\kappa$ and either $\Diamond_\kappa$ or that $\kappa$ is inaccessible. Let $H\subseteq\inj\kappa$ and let $\psi$ be a sentence of $\anform{\LL{\kappa^+}\kappa}$.
  \begin{enumerate}[1.]
    \item Suppose $H$ is 
      a $K_\kappa$ subset of the product space $({}^\kappa\kappa,\producttop)$.
      If there are at least $\kappa^+$ many pairwise non $H$-elementarily embeddable models in $\Mod\kappa\psi$, then there is a perfect set of such models.
    \item The above also holds for $H$-embeddability, as well as $(F,H)$-elementary embeddability when $F$ is either $\LL\lambda\omega$ or $L^n_{\lambda\omega}$, and $n<\omega\leq\lambda\leq\kappa$.
    \item Suppose $H$ is a $K_\kappa$ subset of the $\kappa$-Baire space. Then the same statement holds for $(\LL\lambda\mu,H)$-elementary embeddability, where $\omega<\mu\leq\lambda\leq\kappa$.
    \item Now, suppose that $H$ is a subgroup of $\sym\kappa$ which is $K_\kappa$ again in the product topology $\producttop$. If there are $\kappa^+$ many pairwise non $H$-isomorphic models in $\Mod\kappa\psi$, then there is a perfect set of such models.
  \end{enumerate}
\end{corollary}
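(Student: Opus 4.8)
The plan is to obtain all four items as instances of Theorem~\ref{elem emb}, by selecting in each case a fragment $F$ of $\LL{\kappa^+}\kappa$ for which the relevant notion of embeddability or isomorphism coincides with $(F,H)$-elementary embeddability, and then checking which of the two topological hypotheses on $H$ in Theorem~\ref{elem emb} is at our disposal. Throughout, the hypothesis ``at least $\kappa^+$ many pairwise non-$(F,H)$-elementarily embeddable models in $\Mod\kappa\psi$'' is just the assertion that the $(F,H)$-elementary embeddability relation $R$ on $\Mod\kappa\psi$ has an $R$-independent set of size $\kappa^+$, and the conclusion ``perfectly many such models'' is the existence of a perfect $R$-independent set; by Proposition~\ref{whichever fragment}, the latter does not depend on the fragment used to topologize $\Mod\kappa\psi$ (all of the fragments listed below are fragments of $\LL{\kappa^+}\kappa$ in the sense of Definition~\ref{fragment def}, as observed earlier).

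For item~1 I would take $F=\LL\omega\omega$, the fragment of first order formulas, so that $H$-elementary embeddability is literally $(\LL\omega\omega,H)$-elementary embeddability; since $\LL\omega\omega\subseteq\LL{\kappa^+}\omega$, the second alternative of Theorem~\ref{elem emb} applies to a $K_\kappa$ subset $H$ of $({}^\kappa\kappa,\producttop)$. For item~2, $H$-embeddability is $(\At,H)$-elementary embeddability (a map preserving every atomic formula together with its negation is exactly an embedding), and for the two remaining notions one takes $F=\LL\lambda\omega$ or $F=L^n_{\lambda\omega}$; all of $\At$, $\LL\lambda\omega$ and $L^n_{\lambda\omega}$ consist of formulas with only finitely many free variables and, as $\lambda\leq\kappa$, are contained in $\LL{\kappa^+}\omega$, so again the second alternative of Theorem~\ref{elem emb} applies. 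For item~3 I would take $F=\LL\lambda\mu$ with $\omega<\mu\leq\lambda\leq\kappa$; this is a fragment of $\LL{\kappa^+}\kappa$ but, because $\mu>\omega$, in general not contained in $\LL{\kappa^+}\omega$, so here one must use that $H$ is a $K_\kappa$ subset of the $\kappa$-Baire space --- precisely the hypothesis of item~3 --- and invoke the first alternative of Theorem~\ref{elem emb}. Finally, for item~4 the point is that when $H$ is a subgroup of $\sym\kappa$, a map $h\in H$ witnesses $(\At,H)$-elementary embeddability of $\struktura A$ into $\struktura B$ if and only if $h$ is an isomorphism of $\struktura A$ onto $\struktura B$: being a bijection of $\kappa$ that preserves every atomic formula and its negation, $h$ both preserves and reflects satisfaction of atomic formulas on every tuple, hence is an isomorphism. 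Thus $H$-isomorphism coincides with $(\At,H)$-elementary embeddability, $\At\subseteq\LL{\kappa^+}\omega$, and the second alternative of Theorem~\ref{elem emb} yields the conclusion.

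With these identifications in place the corollary follows immediately from Theorem~\ref{elem emb} in each case. There is essentially no obstacle beyond the bookkeeping described above; the one point that genuinely has to be noticed --- rather than merely checked --- is the reduction in item~4, namely that an element of $\sym\kappa$ preserving $\At$ is automatically an isomorphism and not just an embedding, which is what lets the $H$-isomorphism relation (for $H$ a subgroup of $\sym\kappa$) be handled uniformly as a special case of $(\At,H)$-elementary embeddability.
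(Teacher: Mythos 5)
Your proposal is correct and is exactly the argument the paper intends: the corollary is stated as a list of special cases of Theorem~\ref{elem emb}, obtained by choosing the fragment $F$ appropriate to each notion ($\LL\omega\omega$, $\At$, $\LL\lambda\omega$ or $L^n_{\lambda\omega}$, $\LL\lambda\mu$) and checking whether $F\subseteq\LL{\kappa^+}\omega$ to decide which topological hypothesis on $H$ is needed. Your observation in item~4 that a permutation of $\kappa$ preserving $\At$ is automatically an isomorphism matches the paper's own remark (in its list of examples) that $H$-isomorphism for $H\leq\sym\kappa$ is a special case of $(F,H)$-elementary embeddability.
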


\subsection*{Acknowledgements}
The research of the first author was partially supported by the Central European University Budapest Foundation (CEUBPF)
and by 
the Hungarian National Research, Development and Innovation Office grant no. 113017.
The authors would like to thank Menachem Magidor for suggesting arguments found in the Introduction and in Remark~\ref{weakly compact2}.
The first author would like to thank Lajos Soukup, Tapani Hyttinen, Vadim Kulikov, Philipp Schlicht and Boban Veli\u ckovi\'c for valuable discussions and G\'abor S\'agi for reading and making useful comments on this manuscript.
\bibliographystyle{abbrv}
\bibliography{references}
 \end{document}